\newtheorem{theorem}{Theorem}
\newtheorem{lemma}[theorem]{Lemma}
\newtheorem{corollary}[theorem]{Corollary}
\newtheorem{observation}[theorem]{Observation}
\newcommand{\mex}{\mathrm{mex}}
\newcommand{\sg}{\mathrm{SG}}
\title{Some results on LCTR, an impartial game on partitions\thanks{This work is supported in part by the Slovenian Research Agency (research program P1-0383, research projects N1-0160, N1-0209 and J3-3003, and bilateral project BI-US/22-24-164).}}
\author{Eric Gottlieb \and Jelena Ilić \and Matjaž Krnc}
\begin{document}
\maketitle
\begin{abstract}

We apply the Sprague-Grundy Theorem to LCTR, a new impartial game on partitions in which players take turns removing either the Left Column or the Top Row of the corresponding Young diagram. We establish that the Sprague-Grundy value of any partition is at most $2$, and 
determine Sprague-Grundy values for several infinite families of partitions.
Finally, we devise a dynamic programming approach which, for a given partition $\lambda$ of $n$, determines the corresponding Sprague-Grundy value in $O(n)$ time. 
\end{abstract}

\section{Introduction}

Integer partitions are widely-studied objects of interest to researchers in combinatorics, number theory, representation theory, and physics. Combinatorial Game Theory is a growing discipline concerned with the study of two-player games with perfect information and no randomization (see \cite{
andrews1998theory,
berlekamp2018winning1,
berlekamp2018winning2,
berlekamp2018winning3,
berlekamp2017winning4,
siegel2013combinatorial}). 
The aim of this discipline is to develop, study, and apply mathematical tools for analyzing such games with the ultimate goal of determining which player can force a win (see \cite{Gru39,Spr35,Spr37}). 

In this paper, we describe and analyze LCTR, an impartial game that is played on (integer) partitions. In \cref{prelim}, we define some terms, notation, and theorems concerning combinatorial game theory and partitions. \cref{sec:LCTR} contains the main results of our work. Here, we will describe the game LCTR, determine Sprague-Grundy values for certain infinite families of partitions, and prove some complexity results. In \cref{conc} we offer some directions for future study. 

\section{Preliminaries}\label{prelim}

In this section we provide background information on combinatorial game theory and partitions. 

\subsection{Combinatorial game theory}

Combinatorial games are a form of strategic interaction between two players. They are \textit{deterministic}, meaning the outcome depends only on the moves chosen by the players and not on randomizing elements like dice, deals from a card deck, flips of a coin, and the like. They assume \textit{perfect information}, which means that both players know the position of the game at all times and what moves are available to the players from any given position. 

Starting from some initial position, players take turns choosing a move from a set of allowable moves until the game reaches a \textit{terminal position}, i.e., a position from which no legal move remains. Under \textit{normal play}, the last player to make a legal move wins. Under \textit{mis\'ere play}, the last player to make a legal move loses. In this paper, we focus exclusively on LCTR under normal play. 

A combinatorial game is called \textit{impartial} if the set of allowable moves depends only on board position and not on the player making the move; otherwise it is called \textit{partisan}. For purposes of this discussion, we assume that games guaranteed to conclude after a finite number of moves from any position. LCTR is impartial and terminates after finitely many moves. 

Let $\mathfrak P(A)$ denote the power set of the set $A$. Formally, an impartial combinatorial game is a pair $G = (X, F)$. Here $X$ is a set whose elements are referred to as \textit{positions} and $F: X \rightarrow \mathfrak P(X)$ is a function that describes the legal moves of $G$. That is, $y$ is a legal move from $x$ if and only if $y \in F(x)$. Terminal positions are those $x$ for which $F(x) = \emptyset$. If $y \in F(x)$ then we say that $y$ is a \textit{follower} of $x$. 

Impartial games under normal play are subject to study via Sprague-Grundy theory, which associates a nonnegative integer to each position of the game. This is done in such a way that the previous player has a winning strategy precisely when the associated number is zero. Thus, the Sprague-Grundy theorem refines the notion of winning and losing positions. It is a powerful tool that is not available when studying partisan games or games under mis\'ere play (see~\cite{siegel2013combinatorial}).

We say that a player has a \textit{winning strategy} if they can win no matter what moves their opponent makes. An \textit{$\mathcal{N}$-position} is a position in which the next player to move has a winning strategy. A \textit{$\mathcal{P}$-position} is a position in which the previous player has a winning strategy. Every position in any game is in exactly one of $\mathcal{N}$ and $\mathcal{P}$. We say that $\mathcal P$-positions are \textit{losing} and $\mathcal N$-positions are \textit{winning}. 

\begin{theorem}\label{thm0}
  The $\mathcal{N}$- and $\mathcal{P}$-positions of an impartial combinatorial game under normal play are characterized by the following properties.
  \begin{itemize}
    \item A position is in $\mathcal{N}$ if and only if there is at least one move to a $\mathcal{P}$-position.
    \item A position is in $\mathcal{P}$ if and only if there are no moves to a $\mathcal P$-position.
  \end{itemize}
\end{theorem}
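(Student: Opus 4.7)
The plan is to prove the theorem by well-founded induction using the assumption that every play terminates in finitely many moves. For each position $x$, let $\ell(x)$ denote the length of a longest play starting from $x$; by the finiteness assumption this is a well-defined nonnegative integer, so I can induct on $\ell(x)$.

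For the base case, $\ell(x)=0$ means $x$ is terminal, i.e.\ $F(x)=\emptyset$. Under normal play, the player to move from $x$ cannot move and therefore loses, so the previous player wins; thus $x\in\mathcal{P}$. Since $x$ trivially has ``no moves to a $\mathcal{P}$-position,'' this agrees with both bullets of the theorem (vacuously in the second).

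For the inductive step, assume the characterization holds for every $y$ with $\ell(y)<\ell(x)$. Every $y\in F(x)$ satisfies $\ell(y)<\ell(x)$, so by hypothesis each follower has already been correctly classified as either $\mathcal{N}$ or $\mathcal{P}$. I then reason strategically in two cases. If some $y\in F(x)$ lies in $\mathcal{P}$, the player to move from $x$ plays to $y$ and wins by the inductive characterization of $\mathcal{P}$, so $x\in\mathcal{N}$. If instead every $y\in F(x)$ lies in $\mathcal{N}$, then no matter which move the player to move from $x$ chooses, the opponent is handed a winning position; hence $x\in\mathcal{P}$. Combining these two cases gives exactly the ``only if'' and ``if'' directions of both bullets. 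Finally, I note that the dichotomy of $\mathcal{N}$ and $\mathcal{P}$ (stated just before the theorem) is recovered automatically, because the inductive step assigns exactly one of the two labels to $x$.

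The only subtle point, and the one I would be careful to state explicitly, is the legitimacy of the induction: without the standing assumption that the game terminates after finitely many moves from any position, $\ell(x)$ need not be defined and the argument collapses. Once that is in place, the proof is a routine case split, and no Sprague--Grundy machinery is required.
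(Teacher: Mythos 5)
The paper does not actually prove \cref{thm0}; it states the result and defers to the reference cited immediately afterwards, so there is no in-paper argument to compare against. Your proof is the standard one and it is correct: well-founded induction on the length of a longest play, with terminal positions as the base case and the two-way case split (some follower in $\mathcal{P}$ versus all followers in $\mathcal{N}$) giving both bullets at once. Two small points worth making explicit. First, the well-definedness of $\ell(x)$ uses not only that every play is finite but also that $|F(x)|<\infty$ (both assumed in the paper); together these give, via K\"onig's lemma, that the game tree below $x$ is finite, so a longest play exists. Alternatively you could induct directly on the well-founded follower relation and avoid $\ell$ altogether. Second, your inductive hypothesis should be read as including the dichotomy for followers --- i.e.\ each $y\in F(x)$ lies in exactly one of $\mathcal{N}$, $\mathcal{P}$ --- since that is what makes your two cases exhaustive; you handle this correctly by observing that the inductive step assigns exactly one label to $x$, but it deserves to be flagged as part of what is being proved rather than quoted from the preceding paragraph of the paper, which asserts the dichotomy without proof.
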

The second condition implies that every terminal position of a  impartial normal-play game is a $\mathcal P$-position. It also allows us to compute $\mathcal P$-positions recursively. For impartial games under mis\`ere play, terminal positions are defined to be $\mathcal{N}$-positions and non-terminal positions are determined as above. More on this theorem can be found in ~\cite{splet6}. 

The Sprague-Grundy theorem provides a function, called the \textit{Sprague-Grundy function}, from the positions of a game into the set of positive integers. This function has the property that it is zero on a position if and only if that position belongs to $\mathcal P$. 

Let $\mathfrak F(A)$ denote finite subsets of a set $A$. The \textit{minimum excluded value function} $\mex:\mathfrak F(\mathbb N) \rightarrow \mathbb N$ plays an important role in the definition of the Sprague-Grundy function. It is defined by \[ \mex(B) =  \min (\mathbb N \setminus B). \]
For brevity and clarity we omit set brackets, e.g. $\mex(\{0, 1, 3\}) = \mex(0, 1, 3) = 2$.

The Sprague-Grundy function $SG:X \rightarrow \mathbb N$ of $G$ is defined recursively for $x \in X$ by
  \begin{align}
    \sg(x) = \mex(F(x)).\label{eq:SG}
  \end{align}
We refer to $\sg(x)$ as the \textit{Sprague-Grundy value} of $x$. For each terminal position $x$ we have $\sg(x) = 0$ because $F(x)$ is empty. 

The function $F$ induces a directed graph $D_G = (X, E)$, where $(x, y) \in E$ if and only if $y \in F(x)$. For our purposes $|F(x)| < \infty$ for all $x \in X$. Observe that $G$ is guaranteed to terminate in finitely many moves precisely when every path in $D_G$ rooted at any $x \in X$ is of finite length. 

For example, the leftmost image in Figure~\ref{dgraph} shows a directed graph. The vertices and edges correspond to  positions and allowable moves, respectively. The center image shows the Sprague-Grundy values associated to each position. The third image shows the classification of positions as $\mathcal N$ or $\mathcal P$.  

  \begin{figure}
    \centering
    \captionsetup{justification=centering}
    \includegraphics[width=0.7\textwidth]{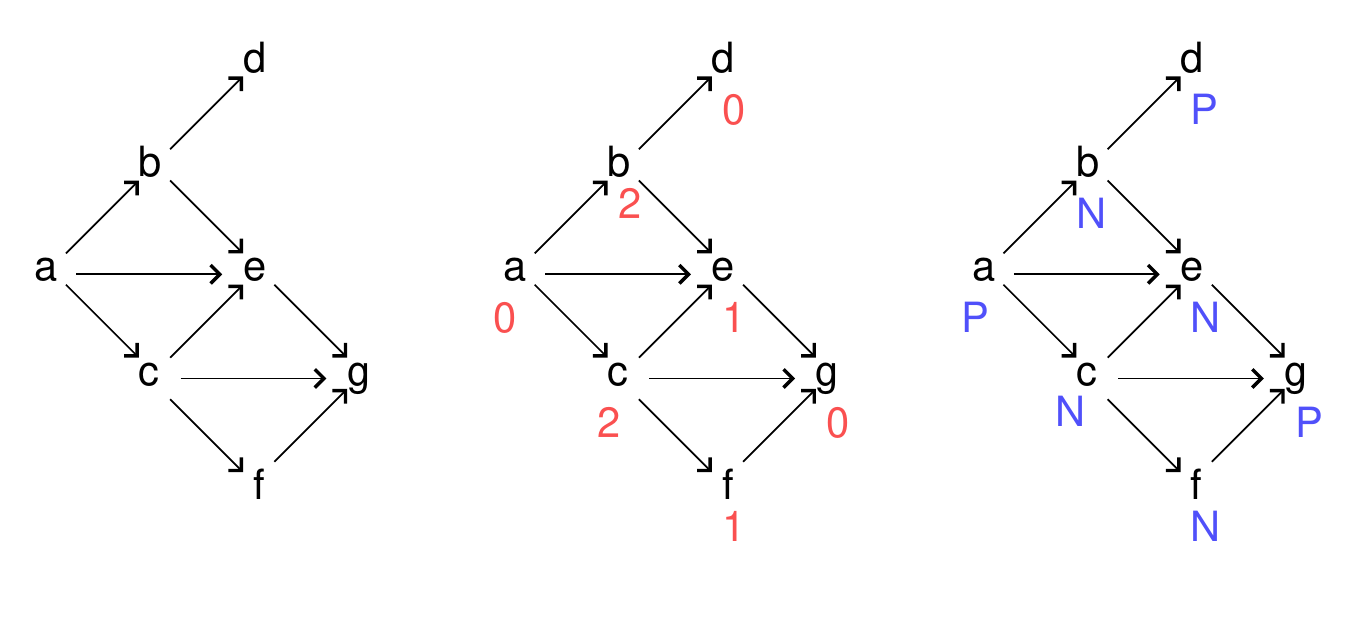}
    \caption{Application of Sprague-Grundy function} \label{dgraph}
  \end{figure}

The Sprague-Grundy values are determined by repeatedly seeking $x \in X$ so that each $y \in F(x)$ is labeled. Thus, in the first iteration, we label the terminal positions $d$ and $g$ with zero. Then, all of the followers of the positions $e$ and $f$ are labeled. The set of Sprague-Grundy values of their followers is $\{0\}$ in both cases, and $\mex(0) = 1$, so they both receive the label 1. At this point, the followers of the positions $b$ and $c$ are all labeled. In both cases, the set of Sprague-Grundy values of their followers is $\{0, 1\}$, so they both receive the label 2 since $\mex(0, 1) = 2$. Finally, the root position $a$ is labeled with 0 since the set of labels of its followers is $\{1, 2\}$ and $\mex(1,2) = 0$.

The $\mathcal{N}$ and $\mathcal{P}$ labels are computed similarly. We repeatedly seek $x \in X$ so that each $y \in F(x)$ is labeled. Thus, the terminal positions $d$ and $g$ are first labeled with $\mathcal{P}$. Now all of the followers of the positions $e$ and $f$ in the center image are labeled. The set of labels of their followers is $\{\mathcal P\}$ in both cases, so these positions receive label $\mathcal N$. Now all followers of the positions $b$ and $c$ have labels; in both cases, the set of labels of their followers is $\{\mathcal N, \mathcal P\}$, so they are labeled $\mathcal N$. The only position that is left to label is the root position $a$, and since the set of labels of its followers is $\{\mathcal{N}\}$, it is labeled with $\mathcal{P}$. Observe that the positions labeled with 0 are precisely those labeled with $\mathcal P$. 

\subsection{Partitions and Young diagrams}

For notation and fundamental properties of partitions we follow the textbook by Andrews \cite{andrews1998theory}.
Given a natural number $n$, a \textit{partition} $\lambda$ of $n$ is just a way of writing $n$ as a sum of positive integers. We refer to $n$ as the \textit{size} of $\lambda$ and write $\lambda \vdash n$. The summands of a partition are called \textit{parts}, and their order does not matter. By convention they are written in descending order. We represent partitions with ordered tuples, using exponentiation to indicate repeated parts. For example, the partitions of 5 are 
    \[ 5 = (5), \quad 4+1 = (4, 1), \quad 3+2 = (3, 2), \quad 3+1+1 = \left( 3, 1^2 \right),\]
    \[2+2+1 = \left( 2^2, 1 \right), \quad 2+1+1+1 = \left( 2, 1^3 \right), \quad  1+1+1+1+1 = \left( 1^5 \right). \]
There is only one partition of zero, namely $()$, i.e. the empty partition. 

Partitions can be visualized by using Young diagrams. The Young diagram of a partition $\lambda_1 + \lambda_2 + \cdots + \lambda_k$ is a left-justified array with $\lambda_i$ squares in the $i$-th row, $i = 1, \dots, k$. For example, the Young diagram of $\left( 5, 3^2, 2, 1^2 \right)$ appears in Figure~\ref{fig:fer}.
\begin{figure}
\centering
\includegraphics[width=0.2\textwidth]{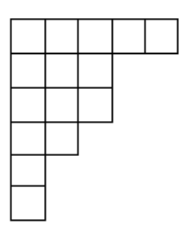}
\caption{The Young diagram of $\left( 5, 3^2, 2, 1^2 \right)$}
\label{fig:fer}
\end{figure}
We refer to a partition and its Young diagram interchangeably. 

The \textit{conjugate} $\lambda'$ of a partition $\lambda = (\lambda_1, \ldots, \lambda_k)$ is obtained from $\lambda$ by exchanging the rows and columns of $\lambda$. More formally, $\lambda' = (\lambda_1', \dots, \lambda_{\lambda_1}')$, where $\lambda_j' = |\{m : \lambda_m \geq j\}|$, i.e., $\lambda_j'$ is the number of parts of $\lambda$ not smaller than $j$. A partition $\lambda$ is \textit{self-conjugate} if $\lambda' = \lambda$. It is well-known that $(\lambda')' = \lambda$.

\section{LCTR \label{sec:LCTR}}

The positions of LCTR are partitions. Given some initial partition $\lambda$, the players take turns by removing the left column or top row of $\lambda$. We use the normal play convention, so the last player with a legal move wins. The only terminal position is the empty partition.

More formally, let $\mathbb P = \{ \lambda \vdash n : n \in \mathbb N\}$. If $\lambda = (\lambda_1, \dots, \lambda_k) \neq ()$, define $T(\lambda) = (\lambda_2, \dots, \lambda_k)$ and $L(\lambda) = (\lambda_1 - 1, \lambda_2 -1, \ldots)$, where nonpositive values are omitted. Then $LCTR = (\mathbb P, F)$, where $F(\lambda) = \{L(\lambda), T(\lambda)\}$ for $\lambda \neq ()$ and $F(()) = \emptyset$. 

An illustration of the computation of the Sprague-Grundy value of the board from \cref{fig:fer} is shown in \cref{fig:computingSG}.
Note that the Sprague-Grundy value of each partition in the figure is at most 2. This is true in general.

\begin{lemma}
 The Sprague-Grundy value in LCTR of any partition does not exceed $2$.
\end{lemma}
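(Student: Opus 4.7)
The plan is to argue directly from the recursive definition of the Sprague-Grundy function, without any case analysis on the shape of $\lambda$. The key structural fact is that in LCTR every non-terminal position has at most two followers: by definition $F(\lambda)=\{L(\lambda),T(\lambda)\}$, so $|F(\lambda)|\le 2$ for every non-empty $\lambda$. Consequently the set of Sprague-Grundy values of the followers,
\[
S(\lambda)=\{\sg(L(\lambda)),\sg(T(\lambda))\}\subseteq\mathbb N,
\]
is a finite subset of $\mathbb N$ of cardinality at most $2$. I would then invoke the elementary observation that $\mex(B)\le|B|$ for any finite $B\subseteq\mathbb N$, since $\mex(B)\ge k+1$ forces $\{0,1,\dots,k\}\subseteq B$, which would give $|B|\ge k+1$. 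Applied to $B=S(\lambda)$ and combined with $\sg(\lambda)=\mex(S(\lambda))$ from \eqref{eq:SG}, this yields $\sg(\lambda)\le 2$.

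The base case is trivial: the only terminal partition $()$ has $\sg(())=0\le 2$. There is really no obstacle here; the bound is an immediate structural consequence of the fact that each position in LCTR branches at most two ways, so the statement does not require an induction on any finer feature of $\lambda$ (e.g.\ its number of parts or its largest part). The only subtlety worth flagging in the write-up is to note that $F(\lambda)$ is to be read as a set rather than a multiset, so that $L(\lambda)=T(\lambda)$ only further decreases $|S(\lambda)|$ and does not threaten the bound.
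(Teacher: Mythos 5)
Your argument is correct and is essentially the paper's own proof: both rest on the observation that $\sg(x)\le|F(x)|$ (i.e.\ $\mex(B)\le|B|$) together with the fact that every LCTR position has at most two followers. You merely spell out the justification of $\mex(B)\le|B|$, which the paper takes as immediate from the definition of mex.
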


\begin{proof}
Let $G = (X, F)$ be any impartial game. For $x \in X$ we have $\sg(x) \leq |F(x)|$ by definition of mex. Since the number of moves from any position in LCTR is at most 2, the result follows. 
\end{proof}

\begin{figure}[h]
\centering
\includegraphics[width=0.6\textwidth]{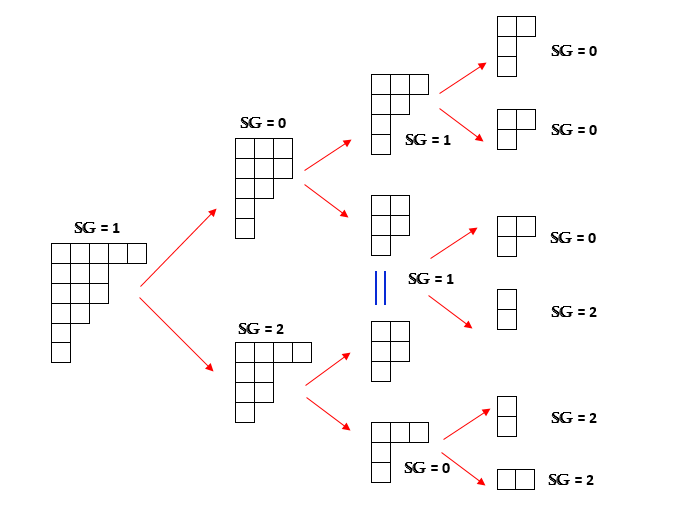}
\caption{Computing the Sprague-Grundy value of $\left( 5, 3^2, 2, 1^2 \right)$. See~\cref{thickGammasect,rectsect} for an explanation of the Sprague-Grundy values of the leaves.
}\label{fig:computingSG}
\end{figure} 

We say that a game on partitions is \textit{conjugate-invariant} if $\sg(\lambda') = \sg(\lambda)$ for every partition $\lambda$. We will use the following lemma to prove that LCTR is conjugate-invariant. 

\begin{lemma} \label{movesconjlem}
Let $\lambda$ be a nonempty partition. Then $L(\lambda) = (T(\lambda'))'$ and $T(\lambda) = (L(\lambda'))'$.
\end{lemma}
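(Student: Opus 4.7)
The plan is to reduce the two identities to one and verify it by a short index computation.

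First I would observe that the second identity is a formal consequence of the first together with the involutive property $(\mu')' = \mu$. Indeed, applying the hypothesized identity $L(\mu) = (T(\mu'))'$ to $\mu = \lambda'$ (which is nonempty because $\lambda$ is) yields $L(\lambda') = (T(\lambda))'$, and conjugating both sides gives $T(\lambda) = (L(\lambda'))'$. So it suffices to prove $L(\lambda) = (T(\lambda'))'$.

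For this remaining identity I would compute the $j$-th part of each side from the definitions given in the preliminaries. Writing $T(\lambda') = (\lambda'_2, \lambda'_3, \ldots, \lambda'_{\lambda_1})$, the $j$-th part of its conjugate is
\[
\bigl|\{\, m \geq 1 : \lambda'_{m+1} \geq j \,\}\bigr| = \bigl|\{\, n \geq 2 : \lambda'_n \geq j \,\}\bigr|.
\]
Since $\lambda'$ is weakly decreasing and $|\{n : \lambda'_n \geq j\}| = \lambda''_j = \lambda_j$, this count equals $\max(\lambda_j - 1, 0)$. On the other side, $L(\lambda)$ has $j$-th part $\lambda_j - 1$ exactly when $\lambda_j \geq 2$, and otherwise the index is omitted by convention. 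The two tuples therefore have the same length $\lambda'_2 = |\{j : \lambda_j \geq 2\}|$ and agree in every slot.

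Conceptually this is simply the well-known fact that reflecting a Young diagram across its main diagonal swaps the operations of deleting the top row and deleting the left column; the computation above is just the formal manifestation of that reflection. The only care needed is the bookkeeping around rows of length $1$, which are dropped by $L$ on one side and produce a $0$ that is dropped when forming the conjugate on the other side. I do not expect this to be a substantive obstacle, just a detail to state cleanly.
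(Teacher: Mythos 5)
Your proposal is correct and follows essentially the same route as the paper: a direct index computation of the $j$-th part of $(T(\lambda'))'$ for the first identity, and the substitution $\lambda \mapsto \lambda'$ followed by conjugation to deduce the second identity from the first. Your version is in fact slightly more careful than the paper's, since you justify the count $\lambda_j - 1$ via $\lambda'' = \lambda$ and monotonicity and explicitly handle the rows of length $1$, whereas the paper simply asserts that step.
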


\begin{proof}
Let $\lambda = (\lambda_1, \dots, \lambda_k)$. To establish the first equality, note that 
\begin{align*} 
(T(\lambda'))' 
&= (T((\lambda_1, \ldots, \lambda_k)'))' \\
&= (T(\lambda_1', \ldots, \lambda_{\lambda_1}'))' \\
&= (\lambda_2', \ldots, \lambda_{\lambda_1}')' \\
&= (\lambda_1'', \ldots, \lambda_{\lambda_2'}''),
\end{align*}
where $\lambda_j''$ is the number of parts of $(\lambda_2', \ldots, \lambda_{\lambda_1}')$ that are not smaller than $j$. This is equal to $\lambda_j-1$, so 
\begin{align*}
(T(\lambda'))' 
&= (\lambda_1'', \ldots, \lambda_{\lambda_2'}'') \\
&= (\lambda_1-1, \ldots, \lambda_k-1) \\
&= L(\lambda_1, \ldots, \lambda_k).
\end{align*}

To establish the second equality, make the change of variable $\lambda \mapsto \lambda'$ in the first equality; this gives 
    \[ L(\lambda') = (T(\lambda''))' = (T(\lambda))'.\]
Taking the conjugate of both sides gives 
    \[ (L(\lambda'))' = (T(\lambda))'' = T(\lambda),\] 
as desired. 
\end{proof}

\begin{lemma} \label{conjinvlem}
LCTR is conjugate-invariant. 
\end{lemma}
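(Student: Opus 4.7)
The plan is to proceed by strong induction on the size $n = |\lambda|$. The base case $n = 0$ is immediate, since the only partition of $0$ is the empty partition, which is its own conjugate, so $\sg(\lambda') = 0 = \sg(\lambda)$.

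For the inductive step, assume the result holds for every partition of size strictly less than $n$, and let $\lambda \vdash n$ with $n \geq 1$. The idea is to rewrite the two followers of $\lambda$ as conjugates of the two followers of $\lambda'$, and then apply the induction hypothesis to pass the conjugates through $\sg$. Concretely, \cref{movesconjlem} gives $L(\lambda) = (T(\lambda'))'$ and $T(\lambda) = (L(\lambda'))'$. Since $\lambda \neq ()$ implies $\lambda' \neq ()$, both $T(\lambda')$ and $L(\lambda')$ are partitions of size strictly less than $n$, so the induction hypothesis applies to them:
\begin{align*}
\sg(L(\lambda)) &= \sg\bigl((T(\lambda'))'\bigr) = \sg(T(\lambda')), \\
\sg(T(\lambda)) &= \sg\bigl((L(\lambda'))'\bigr) = \sg(L(\lambda')).
\end{align*}
Plugging into the defining recursion \eqref{eq:SG},
\[
\sg(\lambda) = \mex\bigl(\sg(L(\lambda)), \sg(T(\lambda))\bigr) = \mex\bigl(\sg(T(\lambda')), \sg(L(\lambda'))\bigr) = \sg(\lambda'),
\]
which closes the induction.

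There is no real obstacle here; all the work was packaged into \cref{movesconjlem}. The only thing to be slightly careful about is ensuring that the induction hypothesis is applicable, i.e., that $L(\lambda')$ and $T(\lambda')$ are strictly smaller than $\lambda$, which follows since each of $L$ and $T$ strictly decreases the size of any nonempty partition.
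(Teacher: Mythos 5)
Your proof is correct and follows essentially the same route as the paper's: induction on the size of $\lambda$, with \cref{movesconjlem} used to rewrite the followers of $\lambda$ as conjugates of the followers of $\lambda'$, then the induction hypothesis to strip the conjugates inside $\sg$. Your added remark that $T(\lambda')$ and $L(\lambda')$ are strictly smaller than $\lambda$ makes the applicability of the induction hypothesis explicit, which the paper leaves implicit.
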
 

\begin{proof}
Let $\lambda$ be a partition. We proceed by induction on the size of $\lambda$. The fact that $()$ is self-conjugate establishes the base case. If $\lambda$ has positive size, then by Lemma~\ref{movesconjlem} we have 

\begin{align*}
    \sg(\lambda) &= \mex(\sg(L(\lambda)), \sg(T(\lambda) )) 
    = \mex(\sg((T(\lambda'))'), \sg((L(\lambda'))')) \\
    &= \mex(\sg(T(\lambda')), \sg(L(\lambda'))) 
    = \sg(\lambda'). 
\end{align*}
\end{proof}

\subsection{Sprague-Grundy values for certain families of partitions}

For some special partitions $\lambda$ we can determine which player has a winning strategy, i.e. whether $\lambda$ is in $\mathcal{P}$ or $\mathcal{N}$, and determine their Sprague-Grundy values. For brevity we sometimes omit one set of parentheses in $\sg(\cdot)$, e.g. we write $SG\left( \left( 2, 1^3 \right) \right)$ as $SG\left( 2, 1^3 \right)$. 

\subsubsection{Rectangles} \label{rectsect}

\textit{Rectangles} are partitions of the form $(n^m)$, for positive integers $n$ and $m$. We now determine the Sprague-Grundy value of rectangles of arbitrary size. 

\begin{lemma} \label{rowlem}
Let $n > 0$. Then $(n)$ and $\left( 1^n \right)$ are $\mathcal{N}$-positions. Furthermore, \[ \sg(n) = SG\left( 1^n \right) = \left\{ \begin{array}{rl}
    1 & \mbox{if $n$ is odd}\\
    2 & \mbox{if $n$ is even}
\end{array} \right..\]
\end{lemma}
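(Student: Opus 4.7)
The plan is to prove the claim for $(n)$ directly by induction on $n$, then transfer to $(1^n)$ via \cref{conjinvlem}, since $(n)' = (1^n)$. Once we know the Sprague-Grundy values are positive in both cases, the $\mathcal{N}$-position claim is immediate from the characterization that $\sg(x)=0$ iff $x \in \mathcal P$.

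The induction setup is forced by the move structure: for $\lambda = (n)$ we have $T((n)) = ()$ and $L((n)) = (n-1)$, so $F((n)) = \{(n-1), ()\}$. Hence
\[
\sg((n)) = \mex\bigl(\sg((n-1)),\, \sg(())\bigr) = \mex\bigl(\sg((n-1)),\, 0\bigr).
\]
The base case $n=1$ gives $F((1)) = \{(\,)\}$ and $\sg((1)) = \mex(0) = 1$, matching the odd case. For the inductive step, if $n$ is even then $n-1$ is odd and by hypothesis $\sg((n-1)) = 1$, so $\sg((n)) = \mex(0,1) = 2$; if $n$ is odd and $n>1$, then $n-1$ is even, $\sg((n-1)) = 2$, and $\sg((n)) = \mex(0,2) = 1$. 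This closes the induction and establishes the formula for $(n)$.

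For $(1^n)$, the cleanest route is to invoke \cref{conjinvlem}: since $(n)' = (1^n)$, we get $\sg((1^n)) = \sg((n))$, and the same formula applies. (Alternatively, one could run an identical induction using $T((1^n)) = (1^{n-1})$ and $L((1^n)) = (\,)$, but this is redundant given conjugate-invariance.) Finally, both values are strictly positive for every $n > 0$, so both $(n)$ and $(1^n)$ lie in $\mathcal N$.

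There is no real obstacle here; the only thing to be careful about is the base case $n=1$, where both followers are the empty partition and $F((1))$ must be treated as the set $\{(\,)\}$ (not a multiset), so $\sg((1)) = \mex(0) = 1$ rather than $\mex(0,0)$. Beyond that, the argument is a direct computation of $\mex$ in the two parities.
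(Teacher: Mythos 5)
Your proof is correct and follows essentially the same route as the paper: the same recursion $\sg((n)) = \mex(0, \sg((n-1)))$ with induction on $n$, the same care at $n=1$ where both followers coincide, and the same appeal to conjugate-invariance for $(1^n)$. The only cosmetic difference is that the paper argues $(n) \in \mathcal N$ directly (the next player removes the whole row), whereas you deduce it from the positivity of the Sprague-Grundy value; both are valid.
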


\begin{proof} 
$(n)$ consists of one row. The next player can win by removing that row, so $(n) \in \mathcal{N}$. 

For $n = 1$, both legal moves result in the empty partition, so $\sg(1) = \mex(0) = 1$. For $n = 2$, we can either remove the whole row which will give Sprague-Grundy value 0, or the leftmost square leaving $(1)$ which has Sprague-Grundy value 1. \\
Hence $\sg(2) = \mex(0, 1) = 2$. 

For $n > 2$, removing the entire row gives the empty partition, while removing the the left column gives $(n-1)$. By induction, we have \begin{align*}
    \sg(n) = \mathrm{mex}(0, \sg(n-1)) &= \left\{ \begin{array}{rl}
        \mathrm{mex}(0, 2) & \mbox{if $n-1$ is even}\\
        \mathrm{mex}(0, 1) & \mbox{if $n-1$ is odd}
    \end{array} \right. \\
    &= \left\{ \begin{array}{rl}
        1 & \mbox{if $n$ is odd}\\
        2 & \mbox{if $n$ is even}
    \end{array} \right.. 
\end{align*} Analogous results follow for $\left( 1^n \right)$ by Lemma~\ref{conjinvlem}. 
\end{proof}

We now introduce notation that allows us to visualize the situation for $(n)$ and other families of partitions. Consider a left-infinite row of squares. By considering any particular square and disregarding the ones to the left, we obtain a partition of the form $(n)$. In that square, we record $\sg(n)$, which is computed by taking the mex of the values to the right and below the square, or zero if no such value is present. Such diagrams record the Sprague-Grundy values of all positions reachable from the given one. See Figure~\ref{rowfig}. 

\begin{figure}[h]
\centering
\includegraphics[width=0.3\textwidth]{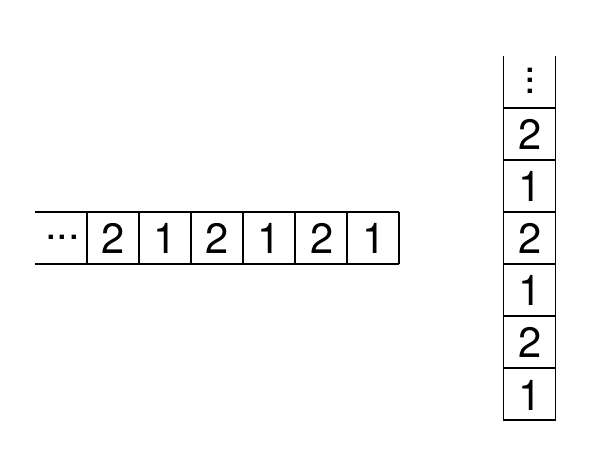}
\caption{Sprague-Grundy values for $(n)$ and $(1^n)$, where $n > 0$} \label{rowfig}
\end{figure} 

We now consider partitions of the form $\left( n^2 \right)$ and $\left( 2^n \right)$. A visual summary for $\left( n^2 \right)$ appears in Figure~\ref{2rowfig}. 

\begin{figure}[h]
\centering
\includegraphics[width=0.3\textwidth]{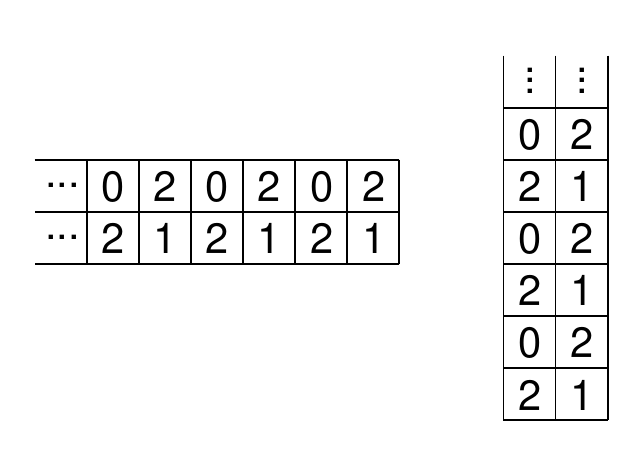}
\caption{Sprague-Grundy values for $\left( n^2 \right)$ and $(2^n)$, where $n > 0$} \label{2rowfig}
\end{figure} 

\begin{lemma} \label{2rowlem}
Let $n$ be a positive integer. Then 
$$SG\left( n^2 \right) = SG\left( 2^n \right) = \left \{ \begin{array}{rl}
    2 & \mbox{if } n \mbox{ is odd} \\
    0 & \mbox{if } n \mbox{ is even}
\end{array} \right..$$
\end{lemma}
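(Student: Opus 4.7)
The plan is to exploit conjugate invariance to reduce the two equalities to one, and then to proceed by straightforward induction on $n$, using \cref{rowlem} to handle the Sprague–Grundy values of the single-row followers.

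First I would observe that the conjugate of $(n^2)$ is exactly $(2^n)$: for each $j \in \{1, \dots, n\}$, both parts of $(n,n)$ are at least $j$, so $\lambda_j' = 2$. By \cref{conjinvlem}, this immediately gives $\sg(n^2) = \sg(2^n)$, so it suffices to compute $\sg(n^2)$.

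Next I would induct on $n$. For the base case $n=1$, note $L(1^2) = ()$ and $T(1^2) = (1)$, so
\[
    \sg(1^2) = \mex(\sg(()), \sg(1)) = \mex(0, 1) = 2,
\]
matching the claim for odd $n$. For the inductive step with $n \geq 2$, the legal moves from $(n^2)$ are $L(n^2) = ((n-1)^2)$ and $T(n^2) = (n)$, hence
\[
    \sg(n^2) = \mex\bigl(\sg((n-1)^2),\, \sg(n)\bigr).
\]
I would then split on the parity of $n$. If $n$ is odd, the induction hypothesis gives $\sg((n-1)^2) = 0$ (since $n-1$ is even) and \cref{rowlem} gives $\sg(n) = 1$, so $\mex(0,1) = 2$. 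If $n$ is even, then $\sg((n-1)^2) = 2$ by induction and $\sg(n) = 2$ by \cref{rowlem}, so $\mex(2,2) = 0$. Both cases agree with the claimed formula, completing the induction.

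There is no real obstacle here; the only thing to be careful about is the base case $n=1$, where $L(1^2)$ collapses to the empty partition rather than to a partition of the form $((n-1)^2)$ with $n-1 \geq 1$, so it must be handled separately from the inductive step.
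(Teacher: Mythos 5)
Your proof is correct and follows essentially the same route as the paper: induction on $n$ with \cref{rowlem} supplying $\sg(n)$ and \cref{conjinvlem} transferring the result to $(2^n)$. The only cosmetic differences are that you verify the base case $\sg(1^2)=2$ by direct computation rather than citing \cref{rowlem}, and you fold $n=2$ into the inductive step where the paper treats it as a second base case; both choices are fine.
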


\begin{proof}
 We proceed by induction on $n$. By Lemma~\ref{rowlem}, we have $SG\left( 1^2 \right) = 2$. Both moves on $\left( 2^2 \right)$ give a partition with Sprague-Grundy value $2$, so $SG\left( 2^2 \right) = \mex(2) = 0$. These establish the base cases. 
 
 Now suppose that $n > 2$. Then by induction and Lemma~\ref{rowlem}, we have 
\begin{align*}
    SG\left( n^2 \right) = \mex\left( SG\left( ( n-1)^2 \right), \sg(n) \right) 
     = &\left\{ \begin{array}{rl}
     \mex(1, 0) & \mbox{if $n-1$ is even} \\
     \mex(2, 2) & \mbox{if $n-1$ is odd}  
     \end{array} \right. \\
    = &\left\{ \begin{array}{rl} 
    2 & \mbox{if $n$ is odd} \\
    0 & \mbox{if $n$ is even}
    \end{array} \right..
\end{align*}
Lemma~\ref{conjinvlem} now establishes the result for $\left( 2^n \right)$. 
\end{proof}

Next we consider partitions of the form $\left( n^3 \right)$ and $\left( 3^n \right)$. A visual representation of this family of partitions and their Sprague-Grundy values can be found in Figure~\ref{3rowfig}. 

\begin{figure}[h]
\centering
\includegraphics[width=0.3\textwidth]{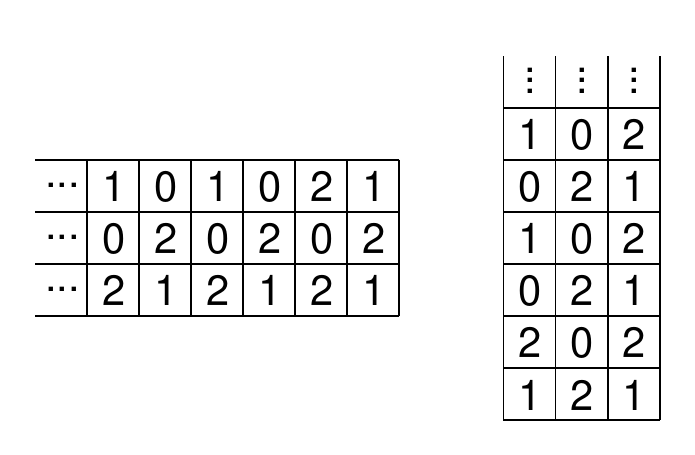}
\caption{Sprague-Grundy values for $\left( n^3 \right)$ and $(3^n)$, where $n > 0$} \label{3rowfig}
\end{figure} 

\begin{lemma} \label{3rowlem}
Let $n \geq 3$. Then 
\[SG\left( n^3 \right) = SG\left( 3^n \right) = \left\{ \begin{array}{rl}
    0 & \mbox{if $n$ is odd} \\
    1 & \mbox{if $n$ is even}
\end{array} \right..\] 
\end{lemma}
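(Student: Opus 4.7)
The plan is to mirror the induction in Lemma~\ref{2rowlem}, proving the claim for $(n^3)$ by induction on $n \geq 3$ and then invoking Lemma~\ref{conjinvlem} to transfer the result to $(3^n)$. The key observation is that the two followers of $(n^3)$ are $L(n^3) = ((n-1)^3)$ and $T(n^3) = (n^2)$; the first is handled by induction and the second by Lemma~\ref{2rowlem}, so everything reduces to a case split on the parity of $n$.

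For the base case $n = 3$, I would compute directly: the followers of $(3^3)$ are $L(3^3) = (2^3)$ and $T(3^3) = (3^2)$. By Lemma~\ref{2rowlem} both of these have Sprague-Grundy value $2$ (since $3$ is odd), so $\sg(3^3) = \mex(2) = 0$, matching the formula for odd $n$.

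For the inductive step with $n \geq 4$, assume the claim for $n-1$. Write
\[\sg(n^3) = \mex\bigl(\sg((n-1)^3),\ \sg(n^2)\bigr)\]
and split on the parity of $n$. If $n$ is odd then $n-1$ is even, so by induction $\sg((n-1)^3) = 1$, while by Lemma~\ref{2rowlem} $\sg(n^2) = 2$; hence $\sg(n^3) = \mex(1, 2) = 0$. If $n$ is even then $n-1$ is odd, so $\sg((n-1)^3) = 0$ and $\sg(n^2) = 0$; hence $\sg(n^3) = \mex(0) = 1$.

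Finally, the identity $\sg(3^n) = \sg(n^3)$ follows immediately from Lemma~\ref{conjinvlem}, since the conjugate of $(n^3)$ is $(3^n)$. I do not anticipate a real obstacle here: the only care required is to line up parities correctly between the induction hypothesis for $((n-1)^3)$ and the Lemma~\ref{2rowlem} value for $(n^2)$, and to be sure the base case $n=3$ is taken in the odd branch rather than used to bootstrap the even branch.
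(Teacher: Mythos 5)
Your proof is correct and follows essentially the same route as the paper: induction on $n$ with base case $n=3$, using Lemma~\ref{2rowlem} for the follower $(n^2)$ and the induction hypothesis for $((n-1)^3)$, then Lemma~\ref{conjinvlem} to transfer to $(3^n)$. The only cosmetic difference is that the paper also checks $n=4$ explicitly as a second base case, which your version handles within the general inductive step.
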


\begin{proof}
 By induction on $n$. Lemma~\ref{2rowlem} gives 
    \[SG\left( 3^3 \right) = \mex\left( SG\left( 3^2 \right), SG\left( 2^3 \right) \right) = \mex(2, 2) = 0\]
and \[ SG\left( 4^3 \right) = \mex\left( SG\left( 4^2 \right), SG\left( 3^3 \right) \right) 
    = \mex(0, 0) 
    = 1.\] 
Lemma~\ref{conjinvlem} now gives $SG\left( 3^4 \right) = 1$.

By Lemma~\ref{2rowlem} and induction we have 
\begin{align*}
SG\left( n^3 \right) = \mex\left( SG\left( n^2 \right), SG\left( (n-1)^3 \right) \right) =& \left\{ \begin{array}{rl} 
\mex(2, 1) & \mbox{if $n-1$ is even} \\
\mex(2, 0) & \mbox{if $n-1$ is odd}
\end{array} \right. \\
=& \left\{ \begin{array}{rl}
0 & \mbox{if $n$ is odd} \\
1 & \mbox{if $n$ is even} 
\end{array} \right..
\end{align*}
This establishes the result for $\left( n^3 \right)$; Lemma~\ref{conjinvlem} gives the result for $\left( 3^n \right)$. 
\end{proof}
Next, we consider the case of a rectangle $\left( m^n \right)$, where $m$ and $n$ are not less than 3. This case is illustrated in Figure~\ref{rectfig}, in which a left- and up-infinite square array is shown. For each square, we get a rectangle by ignoring those squares to its left or above. We place the Sprague-Grundy value of this rectangle in the chosen square. 

\begin{figure}[h]
\centering
\includegraphics[width=0.2\textwidth]{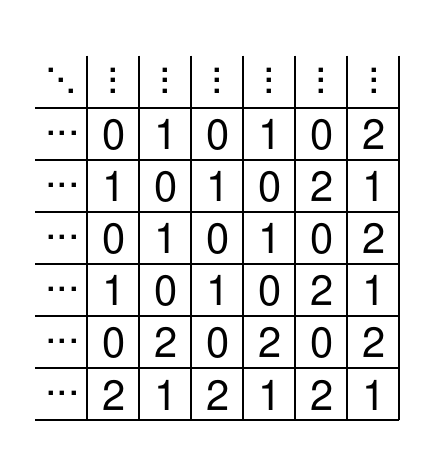}
\caption{Sprague-Grundy values for $\left( n^m \right)$} \label{rectfig}
\end{figure} 

\begin{lemma} \label{rectlem}
Let $m$ and $n$ be integers not less than 3. Then \[ SG\left( n^m \right) = \left\{ \begin{array}{rl}
    0 & \mbox{if $m+n$ is even} \\
    1 & \mbox{if $m+n$ is odd}
\end{array} \right.. \]
\end{lemma}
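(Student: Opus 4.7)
The plan is to prove the statement by induction on $m+n$, handling the boundary cases $m=3$ or $n=3$ via Lemma~\ref{3rowlem} and invoking the recursion $SG(n^m) = \mex(SG((n-1)^m), SG(n^{m-1}))$ together with conjugate-invariance for the inductive step.

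For the base cases, I would verify directly that when $m = 3$ or (by Lemma~\ref{conjinvlem}) $n = 3$ the formula agrees with Lemma~\ref{3rowlem}: indeed, $SG(n^3) = 0$ when $n$ is odd and $1$ when $n$ is even, which matches the parity of $n+3$ as required. This covers every pair $(m,n)$ with $\min(m,n) = 3$, so the only pairs left to handle are those with $m, n \geq 4$.

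For the inductive step, assume $m,n \geq 4$ and that the formula holds for all rectangles $(n')^{m'}$ with $m',n' \geq 3$ and $m' + n' < m + n$. Since the only moves from $(n^m)$ are to $((n-1)^m)$ and to $(n^{m-1})$, and both of these lie in the region $\min(m',n') \geq 3$ with strictly smaller sum, the inductive hypothesis applies to each. Observing that $(n-1)+m$ and $n+(m-1)$ have the same parity, opposite to that of $m+n$, I would split into two cases: if $m+n$ is even then both followers have $SG$-value $1$ by induction, giving $\mex(1,1) = 0$; if $m+n$ is odd then both followers have $SG$-value $0$, giving $\mex(0,0) = 1$. In both cases the computed value matches the claimed formula.

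No step looks like a serious obstacle here; the only mild nuisance is making sure the boundary between ``base case handled by Lemma~\ref{3rowlem}'' and ``inductive step'' is clean. In particular one must check that when $m = 4$ (or $n = 4$) the follower $(n-1)^m = 3^m$ (respectively $(n^{m-1}) = n^3$) is indeed covered by Lemma~\ref{3rowlem} rather than by induction, so that the argument does not circularly rely on an unproven case. Everything else is a routine mex computation.
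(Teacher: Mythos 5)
Your proposal is correct and follows essentially the same route as the paper: induction on $m+n$ with Lemma~\ref{3rowlem} supplying the base cases $m=3$ or $n=3$, followed by the observation that both followers $\left((n-1)^m\right)$ and $\left(n^{m-1}\right)$ have sum of opposite parity, so the recursion reduces to $\mex(1,1)=0$ or $\mex(0,0)=1$. Your extra care about the boundary at $m=4$ or $n=4$ is a reasonable refinement of the paper's slightly terser argument, but it is not a different method.
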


\begin{proof} Lemma~\ref{3rowlem} establishes the base cases $\left( n^3 \right)$ and $\left( 3^n \right)$. Now suppose that $m$ and $n$ are both greater than three. By induction on $m+n$ we have

\begin{eqnarray*} 
SG\left( n^m \right) &=& \mex\left( SG\left( n^{m-1} \right), SG\left( (n-1)^m \right) \right) \\
&=& \left\{ \begin{array}{rl}
    \mex(1, 1) & \mbox{if $m+n-1$ is odd} \\
    \mex(0, 0) & \mbox{if $m+n-1$ is even}
\end{array} \right.\\
&=& \left\{ \begin{array}{rl}
    0 & \mbox{if $m+n$ is even} \\
    1 & \mbox{if $m+n$ is odd} 
\end{array} \right.,
\end{eqnarray*}
as desired.
\end{proof}

\subsubsection{Quadrated partitions}

A partition is said to be \textit{quadrated} if all of its parts are even and each part occurs an even number of times. An example of a quadrated partition appears in Figure~\ref{quadfig}. Again we place in each square the Sprague-Grundy value of the partition obtained by ignoring squares above or to the left of that square. 

\begin{figure}[h]
\centering
\includegraphics[width=0.35\textwidth]{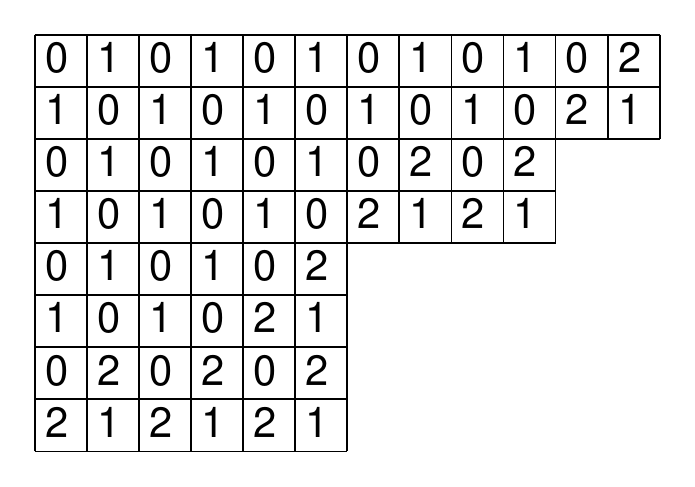}
\caption{Sprague-Grundy values and pendant rectangles for the quadrated partition $\left( 12^2 ,10^2 ,6^4 \right)$} \label{quadfig}
\end{figure} 

\begin{lemma} \label{quadlem}
Let $\lambda$ be a quadrated partition. Then $\lambda$ is a $\mathcal P$-position. Further, if $L(T(\lambda)) = T(L(\lambda))$ is not of the form $(n)$ or $\left( 1^n \right)$, then it is also a $\mathcal P$-position. Hence, $\lambda$ and such $L(T(\lambda))$ have Sprague-Grundy value zero. 
\end{lemma}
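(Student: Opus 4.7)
The plan is to prove both claims together by strong induction on $|\lambda|$, using three closure properties of the quadrated class that are verified directly from the definition: quadrated partitions are closed under conjugation (if $\lambda = (\lambda_1^{2m_1},\ldots,\lambda_k^{2m_k})$ then $\lambda'$ has distinct parts $2m_1,\, 2m_1+2m_2,\, \ldots$ with multiplicities $\lambda_1-\lambda_2,\, \lambda_2-\lambda_3,\, \ldots,\, \lambda_k$, all even), under $L \circ L$ (every part drops by $2$, possibly vanishing), and under $T \circ T$ (the topmost block loses two copies, or vanishes). Moreover $L$ and $T$ commute whenever both compositions are defined, which holds for every nonempty quadrated $\lambda$ with at least four rows.

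The base case $\lambda = ()$ is immediate. The central \emph{sub-claim} I want to invoke repeatedly is: if $\mu$ is a nonempty quadrated partition, then $\sg(L(\mu)) \ge 1$ and $\sg(T(\mu)) \ge 1$. To prove it, observe that $L(L(\mu))$ is quadrated and either empty or strictly smaller than $\mu$; in either case $\sg(L(L(\mu))) = 0$ (by the base case or by induction), so $0$ appears as a follower-value of $L(\mu)$, forcing $\sg(L(\mu)) \ge 1$. The inequality $\sg(T(\mu)) \ge 1$ is analogous, either via $T \circ T$ or by transferring to the conjugate using \cref{movesconjlem,conjinvlem}.

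For the first assertion of the lemma, applying the sub-claim to $\mu = \lambda$ yields $\sg(L(\lambda)), \sg(T(\lambda)) \ge 1$, so $\sg(\lambda) = \mex(\sg(L(\lambda)), \sg(T(\lambda))) = 0$. For the second assertion, write $\nu = L(T(\lambda)) = T(L(\lambda))$. Commuting $L$ and $T$, the two followers of $\nu$ can be rewritten as $T(\nu) = L(T(T(\lambda))) = L(\tau)$ and $L(\nu) = T(L(L(\lambda))) = T(\rho)$, with $\tau := T(T(\lambda))$ and $\rho := L(L(\lambda))$ both quadrated by the closure properties. Provided $\tau$ and $\rho$ are nonempty, the sub-claim applies and gives $\sg(L(\tau)), \sg(T(\rho)) \ge 1$, hence $\sg(\nu) = 0$.

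The main obstacle is pinning down exactly when $\tau$ or $\rho$ becomes empty, since the sub-claim then fails. A direct inspection shows $\tau = T(T(\lambda)) = ()$ precisely when $\lambda = (\lambda_1^2)$, and in that case $\nu = (\lambda_1 - 1)$ has the form $(n)$; while $\rho = L(L(\lambda)) = ()$ precisely when $\lambda = (2^{2m_1})$, and in that case $\nu = (1^{2m_1 - 1})$ has the form $(1^n)$. These are exactly the cases excluded in the hypothesis, and \cref{rowlem} shows $\sg(\nu) \neq 0$ for them, confirming that the exclusion cannot be dropped.
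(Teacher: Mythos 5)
Your proof is correct and is essentially the paper's argument: both rest on the observation that the quadrated class is closed under $L\circ L$ and $T\circ T$, so every follower of $\lambda$ (and of $L(T(\lambda))$) has a quadrated, hence losing, follower — you phrase this via the sub-claim $\sg(L(\mu)),\sg(T(\mu))\ge 1$ where the paper phrases it as the second player mirroring the first player's move. The only (minor) divergence is in the second assertion's edge cases, where you identify the excluded positions directly as the cases $T(T(\lambda))=()$ or $L(L(\lambda))=()$, while the paper instead splits off a sub-case handled by \cref{gammalem}; your treatment is, if anything, slightly cleaner and self-contained.
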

\begin{proof}
We proceed by induction on the sum of the number of rows and columns of $\lambda$. The empty partition is a $\mathcal P$-position, so suppose that $\lambda \vdash n > 0$. 

If the first player takes the left column, then the second player can do the same, since each part is even so there must be another column to take. On the other hand, if the first player takes the top row, then the second player can do the same, since each part appears an even number of times so there must be another row. 

Either way, the first player is left with a quadrated partition, which by induction hypothesis is a $\mathcal P$-position. Since the second player has a winning response to every move of the first player, we see that $\lambda$ is a $\mathcal P$-position. 

If $\lambda = \left( (2n_1)^{2m_1}, (2n_2)^{2m_2}, \ldots, (2n_k)^{2m_k} \right)$ then \[ L(T(\lambda)) = \left( (2n_1-1)^{2m_1-1}, (2n_2-1)^{2m_2}, \ldots, (2n_k-1)^{2m_k} \right). \]
If $m_1 = n_2 = 1$ then $\sg(L(T(\lambda)))=0$ by \cref{gammalem}. 
If $m_1 > 1$ or $n_2 > 1$ then whatever move the first player makes, the second player can make the opposite move. Either way, the result is a quadrated partition, which is a $\mathcal P$-position by induction. Thus $L(T(\lambda))$ is a $\mathcal P$-position. 
\end{proof}

We now determine the Sprague-Grundy values partitions reachable from a quadrated partition $\lambda$ in one move, i.e., those of the form $L(\lambda)$ or $T(\lambda)$. 

\begin{lemma}
Let $\lambda  = \left( (2n_1)^{2m_1}, \ldots, (2n_k)^{2m_k} \right)$ be a quadrated partition. If $n_1 = 1$ or if $m_1 = 1$ and $k = 1$, then $\sg(L(\lambda))$ and $\sg(T(\lambda))$ are determined by \cref{2rowlem}. If $n_1 > 1$ and if $m_1 > 1$ or $k > 1$, then $\sg(L(\lambda)) = \sg(T(\lambda)) = 1$. 
\end{lemma}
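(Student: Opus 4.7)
The plan is to separate the two cases stated in the claim and handle them with essentially different tools: direct reduction to earlier lemmas in the ``thin'' case, and a two-step mex unrolling in the ``thick'' case.

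In the first case, where $n_1 = 1$ or where $m_1 = 1$ and $k = 1$, the partition $\lambda$ itself is either the column of twos $(2^{2m_1})$ or the short rectangle $\left( (2n_1)^2 \right)$. Direct computation shows that $L(\lambda)$ and $T(\lambda)$ are then always of the form $(n)$, $\left( 1^n \right)$, $\left( 2^n \right)$, or $\left( n^2 \right)$, each of which is covered by \cref{rowlem,2rowlem}.

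In the second case, where $n_1 > 1$ and $m_1 > 1$ or $k > 1$, I would unroll the Sprague--Grundy recursion one step:
\[
    \sg(T(\lambda)) = \mex\bigl(\sg(L(T(\lambda))),\, \sg(T(T(\lambda)))\bigr),\qquad \sg(L(\lambda)) = \mex\bigl(\sg(L(L(\lambda))),\, \sg(T(L(\lambda)))\bigr).
\]
The central claim is that each of the four followers on the right-hand sides has Sprague--Grundy value $0$, so both expressions evaluate to $\mex(0,0)=1$. For $L(T(\lambda)) = T(L(\lambda))$, \cref{quadlem} applies directly: the hypotheses $n_1 > 1$ and $m_1 > 1$ or $k > 1$ are exactly what is needed to guarantee that $L(T(\lambda)) = \left((2n_1-1)^{2m_1-1},(2n_2-1)^{2m_2},\ldots,(2n_k-1)^{2m_k}\right)$ is not of the form $(n)$ or $\left(1^n\right)$. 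For $T(T(\lambda))$, either the top block's multiplicity is reduced by $2$ (when $m_1 > 1$) or that block is removed entirely (when $m_1 = 1$, in which case $k > 1$ leaves the remaining quadrated blocks intact). For $L(L(\lambda))$, each part is reduced by $2$, dropping the bottom block if $n_k = 1$; the hypothesis $n_1 > 1$ ensures that at least the top block survives and keeps even multiplicity. In every subcase the result is again a (possibly empty) quadrated partition, so \cref{quadlem} gives Sprague--Grundy value~$0$.

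The main obstacle I anticipate is the careful bookkeeping across the subcases, verifying that $T(T(\lambda))$ and $L(L(\lambda))$ genuinely remain quadrated when degenerate blocks collapse (top block of $\lambda$ disappearing under $T \circ T$, or bottom block disappearing under $L \circ L$). Once these edge cases are dispatched, a single application of mex yields the desired value~$1$.
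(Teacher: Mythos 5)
Your proposal is correct and follows essentially the same route as the paper: unroll the mex recursion one step and observe that $L(L(\lambda))$ and $T(T(\lambda))$ are again quadrated while $T(L(\lambda))=L(T(\lambda))$ is covered by the second part of \cref{quadlem}, giving $\mex(0,0)=1$. In fact your write-up is more complete than the paper's, which only records the computation for $\sg(L(\lambda))$ (leaving $\sg(T(\lambda))$ and the degenerate first case implicit, the former following by conjugate-invariance since the conjugate of a quadrated partition is quadrated) and appears to mislabel the follower $T(L(\lambda))$ as $\left((2n_1)^{2(m_1-1)},\ldots\right)$, which is really $T(T(\lambda))$; your identification of all four second-level followers and of the exact role of the hypotheses $n_1>1$ and ($m_1>1$ or $k>1$) is the careful version of the same argument.
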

\begin{proof}
We have 
$
\sg(L(\lambda)) 
= \mex(\sg(L(L(\lambda))), \sg(T(L(\lambda)))).
$
Observe that 
\begin{align*}
    \sg (L(L(\lambda)))&=\sg \left( 2(n_1-1))^{2m_1}, \ldots, (2(n_k-1))^{2m_k} \right)=0\quad\text{and}\\ 
    \sg(T(L(\lambda)))&= \sg \left( (2n_1)^{2(m_1-1)}, \ldots, (2n_k)^{2m_k} \right) =0,
\end{align*}
so $
\sg(L(\lambda)) = \mex \left( 0,0 \right) = 1. $
\end{proof}

\subsubsection{Staircase partitions} 

We define a \textit{staircase} partition to be one of the form $s_n = (n, n-1, \ldots, 2, 1)$. The example is shown in Figure~\ref{stairfig} suggests the following lemma. 

\begin{figure}[h]
\centering
\includegraphics[width=0.2\textwidth]{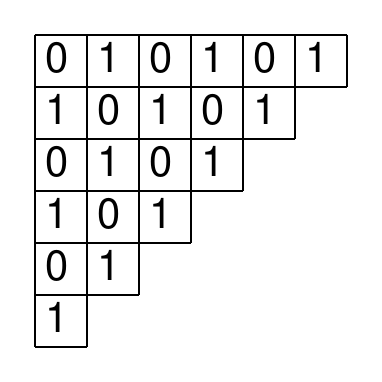}
\caption{Sprague-Grundy values for $\left(6, 5, 4, 3, 2, 1 \right)$} \label{stairfig}
\end{figure}

\begin{lemma}
$\sg(s_n)$ is 1 if $n$ is odd and 0 if $n$ is even. 
\end{lemma}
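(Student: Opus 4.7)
The plan is to exploit a remarkable symmetry of the staircase: both legal moves from $s_n$ produce exactly the same smaller partition, so the $\mex$ computation collapses and a one-step induction suffices.

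First I would verify the key identity $L(s_n) = T(s_n) = s_{n-1}$ for $n \geq 1$. Removing the top row from $s_n = (n, n-1, \dots, 2, 1)$ gives $(n-1, n-2, \dots, 2, 1) = s_{n-1}$ directly from the definition of $T$. For $L$, subtracting $1$ from each part yields $(n-1, n-2, \dots, 1, 0)$; discarding the trailing zero leaves $(n-1, n-2, \dots, 1) = s_{n-1}$ as well. (This is consistent with the fact that $s_n$ is self-conjugate, so \cref{movesconjlem} forces $L(s_n) = (T(s_n'))' = (T(s_n))'$, and indeed $s_{n-1}$ is self-conjugate.)

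Next I would apply the recursive definition of the Sprague-Grundy function. Since $F(s_n) = \{L(s_n), T(s_n)\} = \{s_{n-1}\}$, we have
\[
\sg(s_n) = \mex\bigl(\sg(L(s_n)), \sg(T(s_n))\bigr) = \mex\bigl(\sg(s_{n-1})\bigr).
\]
Now a straightforward induction on $n$ finishes the proof. The base case $n=1$ is handled by \cref{rowlem}, which gives $\sg(s_1) = \sg((1)) = 1$, matching the claim for odd $n$. For the inductive step, if $n$ is even then $n-1$ is odd and $\sg(s_{n-1}) = 1$ by hypothesis, so $\sg(s_n) = \mex(1) = 0$; if $n$ is odd and at least $3$, then $n-1$ is even and $\sg(s_{n-1}) = 0$, giving $\sg(s_n) = \mex(0) = 1$.

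There is no real obstacle here; the only substantive observation is the collapse $L(s_n) = T(s_n)$, after which the recursion $\sg(s_n) = \mex(\sg(s_{n-1}))$ simply alternates between $0$ and $1$ and never reaches the value $2$.
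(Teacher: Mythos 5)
Your proof is correct and follows the same route as the paper's: both rest on the observation that $L(s_n) = T(s_n) = s_{n-1}$, which collapses the recursion to $\sg(s_n) = \mex(\sg(s_{n-1}))$ and lets a one-step induction from $\sg(s_1)=1$ finish the job. You simply spell out the verification of that identity in more detail than the paper does.
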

\begin{proof}
$\sg(s_1) = \sg(1) = 1$. Also, removing the top row or left column of $s_n$ gives $s_{n-1}$, so $\sg(s_n) = \mex(\sg(s_{n-1}))$. By induction, $\sg(s_{n-1})$ will be 0 when $n-1$ is even and 1 when $n-1$ is odd. Thus $\sg(s_n)$ will be 1 when $n$ is odd and 0 when $n$ is even. 
\end{proof}

\subsubsection{Thick $\Gamma$-partitions} \label{thickGammasect}

We define a \textit{thick $\Gamma$-partition} to be one of the form $(n^m, r^s)$, where $n > r$ and $s > 0$. A \textit{$\Gamma$-partition} is a thick $\Gamma$-partition for which $m = r = 1$. 

\begin{lemma} \label{gammalem}
$\Gamma$-partitions are $\mathcal{P}$-positions. Hence, their Sprague-Grundy values are zero. 
\end{lemma}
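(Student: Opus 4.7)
The plan is to apply the recursive Sprague--Grundy formula directly and reduce to \cref{rowlem}. A $\Gamma$-partition has the form $\lambda = (n, 1^s)$ with $n \geq 2$ and $s \geq 1$, so I first identify its two followers. Taking the top row removes the single part of size $n$ and leaves $T(\lambda) = (1^s)$. Taking the left column subtracts one from every part; the $s$ parts of size $1$ vanish and the single part of size $n$ becomes $n-1 \geq 1$, so $L(\lambda) = (n-1)$.

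Next I invoke \cref{rowlem}, which tells us that both $\sg(n-1)$ and $\sg(1^s)$ are nonzero---indeed each lies in $\{1,2\}$---regardless of the parities of $n-1$ and $s$. Consequently the set of Sprague--Grundy values of the followers of $\lambda$ is a subset of $\{1,2\}$, hence does not contain $0$. This immediately gives
\[
\sg(\lambda) \;=\; \mex\bigl(\sg(L(\lambda)),\, \sg(T(\lambda))\bigr) \;=\; 0,
\]
so $\lambda$ is a $\mathcal{P}$-position as claimed.

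There is essentially no main obstacle here: the only point that requires care is the computation of $L(\lambda)$, where one must observe that the constraint $n > r = 1$ in the definition of a $\Gamma$-partition guarantees $n - 1 \geq 1$, so that $L(\lambda)$ is genuinely the one-row partition $(n-1)$ to which \cref{rowlem} applies. Once both followers are identified as one-row or one-column partitions with positive size, the conclusion is forced by the fact that the Sprague--Grundy values of such partitions never equal $0$.
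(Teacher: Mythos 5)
Your proof is correct, but it takes a different route from the paper's. The paper's proof of \cref{gammalem} is a one-line strategic argument: whatever move the first player makes on $(n,1^s)$, the second player makes the opposite move and thereby empties the board, so the position is in $\mathcal{P}$ directly by \cref{thm0}. You instead compute the two followers explicitly, $T(\lambda)=(1^s)$ and $L(\lambda)=(n-1)$, and invoke \cref{rowlem} to conclude that both have Sprague--Grundy value in $\{1,2\}$, so the mex is $0$. The content is ultimately the same fact --- each follower is an $\mathcal{N}$-position because it is a single row or column --- but your version leans on an already-proved lemma and the $\mex$ formalism, which makes the dependence on \cref{rowlem} explicit, whereas the paper's version is self-contained and needs no prior Sprague--Grundy computations. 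Your care in checking that $n>r=1$ guarantees $L(\lambda)=(n-1)$ is a nonempty one-row partition is exactly the right point to verify; both arguments are equally valid.
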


\begin{proof}
The second player can win by making the opposite move of the first player. Thus $(n, 1^s) \in \mathcal P$ and $\sg(n, 1^s) = 0$ whenever $n > 1$ and $s > 0$. 
\end{proof} 

\begin{figure}[h]
\centering
\includegraphics[width=0.2\textwidth]{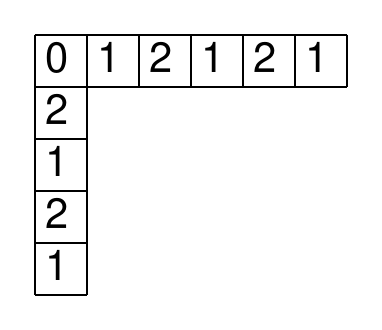}
\caption{The $\Gamma$-partition $\left( 6, 1^4 \right)$} \label{gammafig}
\end{figure}

We now prove a lemma that will be helpful in simplifying our discussion of Sprague-Grundy values of thick $\Gamma$-partitions. 


\begin{lemma} \label{diagzero}
Let $n, r, s$ be positive integers with $n > r$. Then $\left(n^r, r^s\right)$ is a $\mathcal P$-position, so its Sprague-Grundy value is zero. 
\end{lemma}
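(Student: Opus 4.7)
My plan is to combine a mirroring strategy for the second player with an induction on $r$. The central observation is a diagonal identity: for $\lambda=(n^r,r^s)$ with $r\geq 2$, both compound moves coincide,
\[ L(T(\lambda)) \;=\; T(L(\lambda)) \;=\; \bigl((n-1)^{r-1},(r-1)^s\bigr), \]
so the combined effect of applying one $T$ and one $L$ (in either order) is to strip one row from the top block, one column from the left, and one unit off both the height $r$ of the top block and the width $r$ of the bottom block. Moreover, the resulting partition is again of the same form $(n'^{\,r'},r'^{\,s'})$ with $n' > r'$ and $s' > 0$, as long as $r\geq 3$.

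I would first dispose of the base case $r=1$: here $\lambda=(n,1^s)$ with $n>1$ is a $\Gamma$-partition, so $\sg(\lambda)=0$ by \cref{gammalem}. For the inductive step $r\geq 2$, I would prescribe the following second-player strategy: respond to $T$ with $L$, and respond to $L$ with $T$. By the identity above, in either case the position reached after the second player's reply is $\mu := \bigl((n-1)^{r-1},(r-1)^s\bigr)$. If $r=2$, then $\mu=(n-1,1^s)$, which is a $\Gamma$-partition (note $n-1>1$ because $n>r=2$), so $\sg(\mu)=0$ by \cref{gammalem}. If $r\geq 3$, then $\mu$ is of the same form as $\lambda$ with strictly smaller $r$, and still satisfies $n-1>r-1$, so $\sg(\mu)=0$ by the inductive hypothesis. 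In every case the second player has a reply leading to a $\mathcal P$-position, which (by \cref{thm0}) proves $\lambda\in\mathcal P$ and hence $\sg(\lambda)=0$.

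The main obstacle is not conceptual but bookkeeping: one must verify that the compound move $TL=LT$ really does land back inside the hypothesis class, and correctly handle the boundary case $r=2$ where the induction hands off to \cref{gammalem} rather than recursing. Once the diagonal identity is recorded and the $\Gamma$-boundary is peeled off, the argument is a clean mirror-and-descend induction, and conjugate-invariance (\cref{conjinvlem}) is not needed because the pairing strategy is already symmetric in $T$ and $L$.
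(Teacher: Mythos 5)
Your proof is correct and takes essentially the same approach as the paper: dispose of $r=1$ via \cref{gammalem}, then have the second player mirror the first player's move so that the compound position $L(T(\lambda))=T(L(\lambda))=\bigl((n-1)^{r-1},(r-1)^{s}\bigr)$ is again of the hypothesized form and is a $\mathcal P$-position by induction. (Your computation of that compound position, with exponent $s$ rather than $s-1$ on the second block, is in fact the correct one.)
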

\begin{proof}
The result is true if $r = 1$ by~\cref{gammalem}, so suppose that $r> 1$. Whichever move the first player makes, the second player can respond with the opposite move, giving the position $\left((n-1)^{r-1}, (r-1)^{s-1}\right)$, which is also of the hypothesized form. By induction, this is a $\mathcal P$-position. Thus, the two intermediate positions are both $\mathcal N$-positions, so the original position is a $\mathcal P$-position, as desired. 
\end{proof}

In~\cref{tgstructfig} we know the entries in the right and down rectangles, thanks to~\cref{rowlem,2rowlem,rectlem}. By~\cref{diagzero} we know that the indicated diagonal cells are all filled with zeros. We need to determine the entries in the regions containing question marks. The entries in the region above the diagonal depend solely on the entries in the leftmost column of the right rectangle. Similarly, the entries below the diagonal depend only on the entries in the topmost column of the down rectangle. By~\cref{conjinvlem} we need only resolve one of these. We choose to focus on the entries above the diagonal. 

\begin{figure}[h] 
\centering
\includegraphics[width=.3\textwidth]{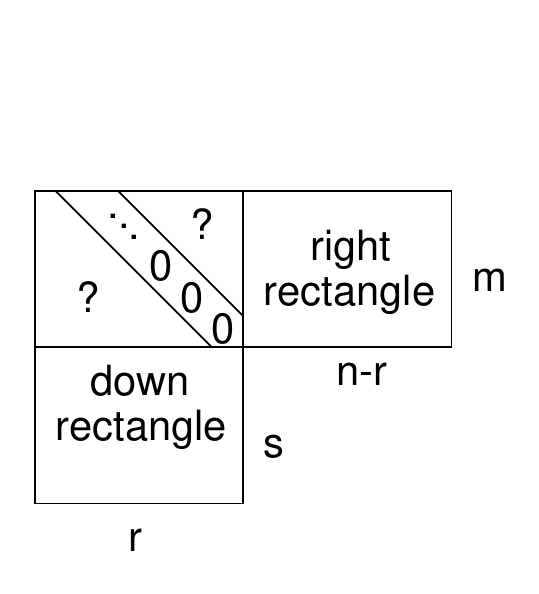}
\caption{The structure of $\left( n^m, r^s \right)$} \label{tgstructfig}
\end{figure}

First, suppose that $n = r+1$, as in the left image in \cref{tgstruct12fig}. The values in boldface are known to us; the following lemma tells us what the remaining values are. 

\begin{figure}[h]  
\centering
\includegraphics[width=.65\textwidth]{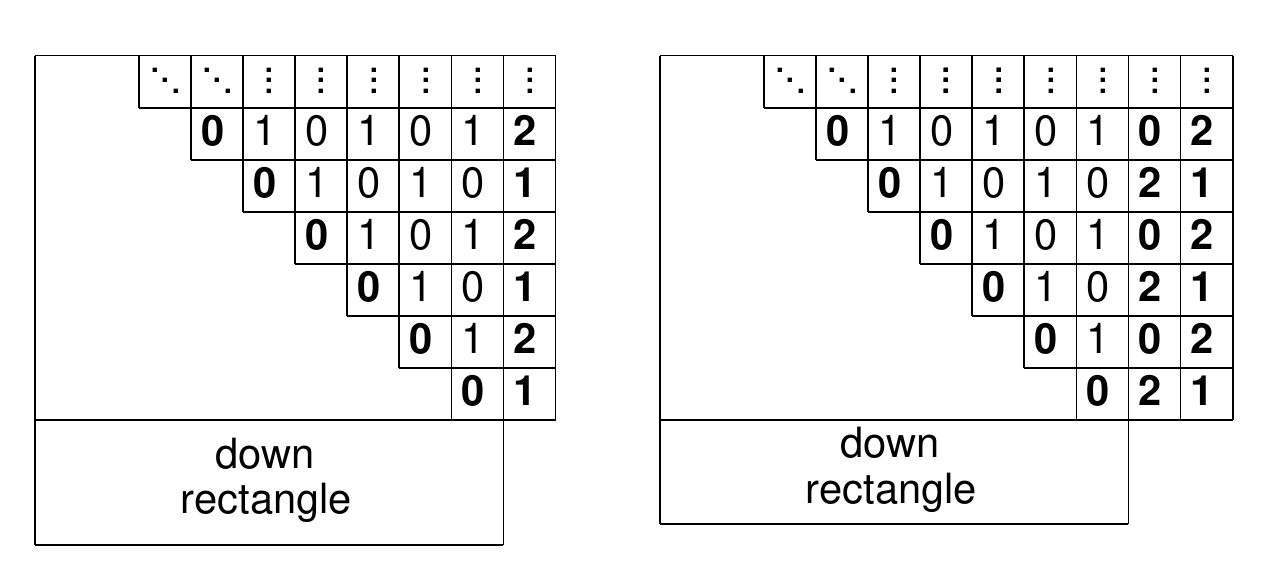}
\caption{Sprague-Grundy values for $\left( (r+1)^m, r^s \right)$ and $\left( (r+2)^m, r^s \right)$, where $m > r$} \label{tgstruct12fig}
\end{figure} 

\begin{lemma} \label{tgstruct1lem}
Let $r, \, m$, and $s$ be be positive integers with $m > r$. Then
\[ \sg\left((r+1)^m, r^s\right) = \left\{ \begin{array}{rl}
    0 & \mbox{if $r+m$ is even} \\
    1 & \mbox{if $r+m$ is odd}
\end{array} \right.. \]
\end{lemma}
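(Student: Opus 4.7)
The plan is to induct on $r + m$, using the crucial observation that both followers of $\lambda = ((r+1)^m, r^s)$ reduce $r + m$ by one: $T(\lambda) = ((r+1)^{m-1}, r^s)$ preserves $r$ while dropping $m$ by one, and $L(\lambda) = (r^m, (r-1)^s) = (((r-1)+1)^m, (r-1)^s)$ has exactly the shape of the lemma with $r$ replaced by $r-1$.

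Two boundary phenomena need to be recorded upfront, since they push us out of the lemma's hypothesis $m > r$. When $m = r + 1$, the $T$-move produces $((r+1)^r, r^s)$, whose Sprague-Grundy value is $0$ by~\cref{diagzero}. When $r = 1$, the $L$-move produces $(1^m)$, whose Sprague-Grundy value is $1$ or $2$ (for $m$ odd or even) by~\cref{rowlem}. Outside these boundaries, both $T(\lambda)$ and $L(\lambda)$ are again of the form covered by the lemma, with strictly smaller $r + m$.

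The base case $r + m = 3$ corresponds to $(r, m) = (1, 2)$, where $\sg(2^2, 1^s) = \mex(\sg(2, 1^s), \sg(1^2)) = \mex(0, 2) = 1$ by~\cref{gammalem,rowlem}, matching $r + m = 3$ odd. For the inductive step, I would separate the $r = 1$ case, where the recursion uses the inductive value $\sg(2^{m-1}, 1^s)$ together with $\sg(1^m)$ from~\cref{rowlem}, from the $r \geq 2$ case, where both followers yield values directly from the inductive hypothesis (appealing to~\cref{diagzero} when $m = r + 1$).

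The key arithmetic fact that makes everything click is that $r + (m-1)$ and $(r-1) + m$ share the same parity, so the two follower Sprague-Grundy values computed by the inductive hypothesis always coincide in $\{0, 1\}$. Taking $\mex$ of such a duplicated value flips it, which is precisely the parity flip from $r + m - 1$ to $r + m$, matching the claimed formula. I expect the main obstacle to be not any conceptual difficulty but the careful case separation: ensuring that the $r = 1$ branch, the boundary $m = r+1$ where $T(\lambda)$ leaves the lemma's scope, and the generic $r \geq 2$, $m > r+1$ interior case are each wired to the correct auxiliary lemma or inductive instance.
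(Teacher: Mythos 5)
Your proposal is correct and follows essentially the same route as the paper: induction on $m+r$ with the same base case $\sg\left(2^2,1^s\right)=\mex(0,2)=1$, the same two followers, and the same reliance on \cref{diagzero} for the $m=r+1$ boundary and \cref{rowlem} for the $r=1$ boundary. If anything you are more explicit than the paper about these boundary cases (the paper folds the $m-1=r$ situation into its parity bookkeeping without comment); just note that your closing claim that the two follower values ``always coincide in $\{0,1\}$'' fails precisely in the $r=1$, $m$ even branch, where they are $0$ and $2$ --- which you do already flag for separate treatment, and where $\mex(0,2)=1$ still gives the right answer.
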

\begin{proof}
We use induction on $m+r$. For the base step, suppose $m+r = 3$. Then $r = 1$ and $m = 2$, so 
\[ \sg \left( 2^2, 1^s \right) = \mex\left( \sg \left( 2, 1^s \right), \sg \left( 1^2 \right) \right) = \mex(0, 2) = 1.\] 
For the induction step, suppose that $m+r > 3$. If $m+r$ is even, then 
\begin{align*}
\sg\left((r+1)^m, r^s\right) 
= &\mex\left( \sg\left( (r+1)^{m-1}, r^s \right), \sg\left( r^m, (r-1)^s \right) \right) \\
= &\mex\left( 1, 1 \right) = 0. 
\end{align*}

If $m+r$ is odd, then 
\begin{align*}
\sg\left((r+1)^m, r^s\right) 
= &\mex\left( \sg\left( (r+1)^{m-1}, r^s \right), \sg\left( r^m, (r-1)^s \right) \right) \\
= &\left\{ \begin{array}{rl}
    \mex \left( 0, 0 \right) & \mbox{if $r > 1$}\\
    \mex \left( 0, 2 \right) & \mbox{if $r = 1$} 
\end{array} \right. \\
=& 1. 
\end{align*}
This establishes the result. 
\end{proof}

Next we consider the case when $n = r+2$. See the right image in \cref{tgstruct12fig}. 

\begin{lemma} \label{tgstruct2lem}
Let $r, m$ and $s$ be positive integers with $m > r$. Then 
\[ \sg\left((r+2)^m, r^s \right) = \left\{ \begin{array}{rl}
0 & \mbox{if $r+m$ is even} \\
1 & \mbox{if $r+m$ is odd}
\end{array} \right.. \]
\end{lemma}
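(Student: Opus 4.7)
The plan is to mirror the proof of \cref{tgstruct1lem} almost verbatim: induct on $m$ with $r$ fixed, base case $m = r+1$, inductive step $m > r+1$. At each step we compute the two followers of $\left((r+2)^m, r^s\right)$, namely $T\left((r+2)^m, r^s\right) = \left((r+2)^{m-1}, r^s\right)$ and $L\left((r+2)^m, r^s\right)$, and then take the mex.

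For the base case $m = r+1$, the top-move child is $\left((r+2)^r, r^s\right)$, whose value is $0$ by \cref{diagzero}. For the left-move child we split on $r$: if $r > 1$ it is $\left((r+1)^{r+1}, (r-1)^s\right)$, and since $(r-1)+(r+1) = 2r$ is even, \cref{tgstruct1lem} (applied with parameter $r-1$ in place of $r$, noting $r+1 > r-1$) gives value $0$; if $r = 1$, the left move collapses the short rows and yields $\left(2^2\right)$, whose value is $0$ by \cref{2rowlem}. Either way the mex is $\mex(0,0) = 1$, matching the claim since $r+m = 2r+1$ is odd.

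For the inductive step $m > r+1$, the top-move child $\left((r+2)^{m-1}, r^s\right)$ satisfies $m-1 > r$, so the induction hypothesis applies and its value is $0$ or $1$ according as $r+m-1$ is even or odd. The left-move child requires another case split on $r$: for $r > 1$ it is $\left((r+1)^m, (r-1)^s\right)$ with $m > r > r-1$, whose value is determined by \cref{tgstruct1lem} using the parity of $(r-1)+m$; for $r = 1$ it is the rectangle $\left(2^m\right)$ with $m \geq 3$, whose value ($0$ for $m$ even, $2$ for $m$ odd) is given by \cref{2rowlem}. In all four subcases (two parities of $r+m$, two ranges of $r$), the two child values are respectively $\{1,1\}$ or $\{1,2\}$ when $r+m$ is even, and $\{0,0\}$ when $r+m$ is odd, so the mex gives $0$ or $1$ as claimed.

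The only real obstacle is the bookkeeping around $r = 1$: when $r$ drops to zero after the left move, the short rows disappear entirely and one leaves the family of thick $\Gamma$-partitions, so \cref{tgstruct1lem} cannot be applied there and must be replaced by \cref{2rowlem}. Once that special case is handled cleanly, the rest is a routine parity check.
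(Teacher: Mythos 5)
There is a genuine gap in how you handle the left-move child when $r > 1$. You compute $L\left((r+2)^m, r^s\right) = \left((r+1)^m, (r-1)^s\right)$ correctly, but this partition has its two part-sizes differing by $2$, not by $1$: it is $\left((\rho+2)^m, \rho^s\right)$ with $\rho = r-1$, i.e.\ a member of the family of the \emph{present} lemma with $r-1$ in place of $r$, not of the family $\left((\rho+1)^m, \rho^s\right)$ covered by \cref{tgstruct1lem}. So your citation of \cref{tgstruct1lem} ``with parameter $r-1$'' is invalid (that lemma speaks about $\left(r^m, (r-1)^s\right)$, a different partition), and since your induction is on $m$ alone with $r$ fixed, the induction hypothesis does not cover this child either: it has the same $m$ but a smaller $r$, which your induction never reaches. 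The arithmetic happens to come out right only because \cref{tgstruct1lem} and the present lemma assert the identical parity formula, which masks the circularity.

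The repair is exactly what the paper does: induct on $m+r$ rather than on $m$ with $r$ fixed. Then the left-move child $\left((r+1)^m, (r-1)^s\right)$ has parameter sum $m+r-1$ and is covered by the induction hypothesis of this same lemma for $r>1$ (for $r=1$ it degenerates to the rectangle $\left(2^m\right)$ and \cref{2rowlem} applies, as you correctly note), while the top-move child $\left((r+2)^{m-1}, r^s\right)$ is likewise covered by the induction hypothesis, or by \cref{diagzero} in the base case $m-1=r$. With that change of induction variable, your case analysis and all the mex computations go through essentially as written.
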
 
\begin{proof}
We use induction on $m+r$. For the base step, suppose $m+r = 3$. Then $r = 1$ and $m = 2$, so 
\[ \sg \left( 3^2, 1^s \right) = \mex\left( \sg \left( 3, 1^s \right), \sg \left( 2^2 \right) \right) = \mex(0, 0) = 1.\] 
For the induction step, suppose that $m+r > 3$. If $m+r$ is odd, then 
\begin{align*}
\sg\left((r+2)^m, r^s\right) 
=& \mex\left( \sg\left( (r+2)^{m-1}, r^s \right), \sg\left( (r+1)^m, (r-1)^s \right) \right) \\
=& \mex\left( 0, 0 \right) = 1. 
\end{align*}

If $m+r$ is even, then 
\begin{align*}
\sg\left((r+2)^m, r^s\right) 
&= \mex\left( \sg\left( (r+2)^{m-1}, r^s \right), \sg\left( (r+1)^m, (r-1)^s \right) \right) \\
&= \left\{ \begin{array}{rl}
    \mex \left( 1, 1 \right) & \mbox{if $r > 1$}\\
    \mex \left( 1, 2 \right) & \mbox{if $r = 1$} 
\end{array} \right. \\
&=  0.
\end{align*}
This establishes the result. 
\end{proof}

Finally we consider the case when $n > r+2$. We treat the even and odd cases for $n-r$ separately. 

\begin{figure}[h] 
\centering
\includegraphics[width=.9\textwidth]{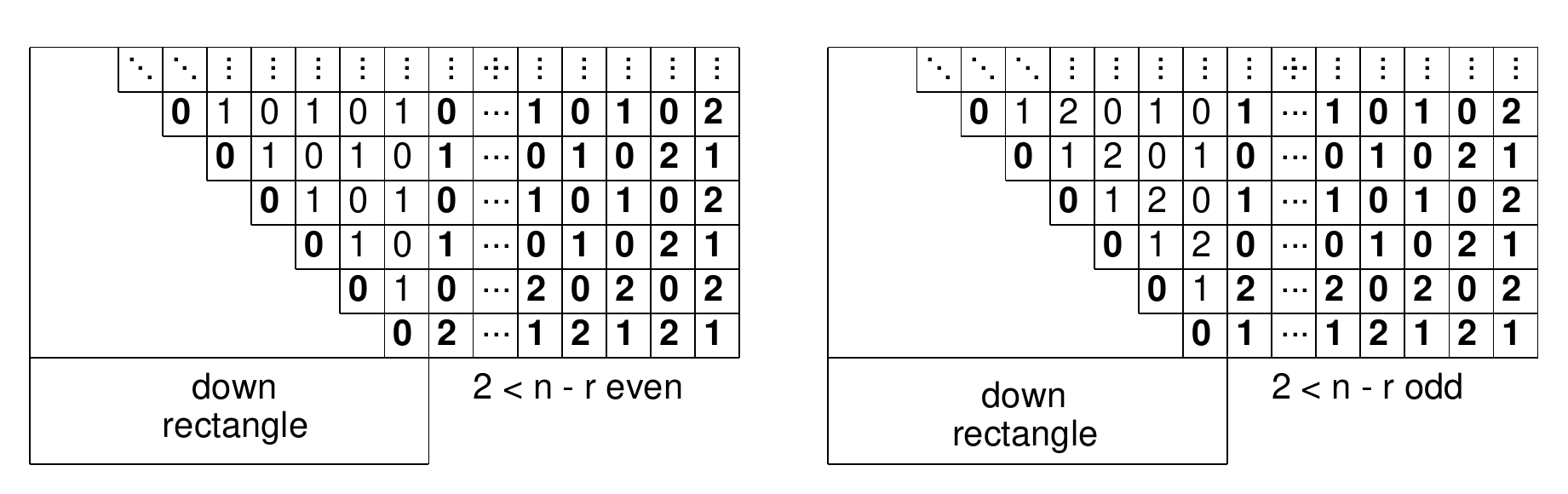}
\caption{Sprague-Grundy values of $\left( (n)^m, r^s \right)$ where $m > r$} \label{tgstruct3fig}
\end{figure}

\begin{lemma} \label{tgstruct3lem}
Let $m, n, r$, and $s$ be positive integers with $n > r + 2$ and $m > r$. If $n-r$ is even, then 
\[ \sg\left( n^m, r^s \right) 
= \left\{ \begin{array}{rl} 
    0 & \mbox{if $r+m$ is even} \\
    1 & \mbox{if $r+m$ is odd}
\end{array} \right..\]
If $n-r$ is odd, then 
\[ \sg\left( n^m, r^s \right) 
= \left\{ \begin{array}{rl} 
    0 & \mbox{if $m=r$, or $m-r$ is odd and $m > 3$} \\
    1 & \mbox{if $m=r+1$, or $m-r$ is even and $m > 4$} \\
    2 & \mbox{if $m = r+2$}
\end{array} \right..\]
\end{lemma}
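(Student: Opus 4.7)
The plan is to induct on the pair $(r, m)$ under lexicographic order, using the standard recursion
\[ \sg(n^m, r^s) = \mex\bigl(\sg(n^{m-1}, r^s),\, \sg((n-1)^m, (r-1)^s)\bigr), \]
where the second argument is understood as $\sg((n-1)^m)$ if $r = 1$. Both arguments are strictly smaller in lex order, and both preserve the hypotheses $n > r + 2$ and the parity of $n - r$, so the inductive hypothesis applies at each non-boundary recursive call.

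Before the case analysis I would record how to evaluate each of the two arguments. The first argument reduces to $0$ via \cref{diagzero} when $m - 1 = r$; otherwise $m - 1 > r$ and the inductive hypothesis applies with unchanged $n$ and $r$. The second argument falls under the inductive hypothesis when $r > 1$ (with parameters $(n-1, r-1, m)$); when $r = 1$ it is a rectangle handled by \cref{2rowlem}, \cref{3rowlem}, or \cref{rectlem}. A crucial observation is that both arguments decrease $r + m$ by exactly one, so the relevant parity classes move in lockstep with that of the original position.

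In the even case ($n - r$ even), this lockstep immediately gives $A = B = 0$ when $r + m$ is odd and $A = B = 1$ when $r + m$ is even (the boundary $m = r + 1$ being handled by \cref{diagzero} with the same parity outcome), so $\mex(A, B)$ evaluates to $1$ or $0$ exactly as claimed. In the odd case ($n - r$ odd), the values $1, 2, 0, 1, 0, 1, \ldots$ for $m = r+1, r+2, r+3, r+4, \ldots$ are non-monotone, so I would verify the four base stretches $m \in \{r+1, r+2, r+3, r+4\}$ by direct mex computations (each reducing to values established by \cref{diagzero,2rowlem,3rowlem,rectlem} or by the preceding subcases), and then close out $m \geq r + 5$ by induction, where the parity shift in both arguments yields $A = B = 0$ if $m - r$ is even and $A = B = 1$ if $m - r$ is odd.

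The main obstacle is the bookkeeping in the odd case. The anomalous value $\sg(n^{r+2}, r^s) = 2$ must propagate correctly through the recursion at $m = r + 3$ (where $A = 2$ combines with $B = 1$ to give $\mex = 0$) and at $m = r + 4$, and the parity shift in the second argument (which sends $m - r$ to $m - r + 1$) must mesh with the $r = 1$ boundary, where \cref{rectlem} replaces the inductive call. Verifying that these alignments hold across all four subcases of the odd stretch, and connect seamlessly with the generic inductive step for $m \geq r + 5$, is the most delicate part of the argument.
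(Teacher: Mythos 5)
Your proposal is correct and follows essentially the same route as the paper: induction via the recursion $\sg(n^m,r^s)=\mex\bigl(\sg(n^{m-1},r^s),\sg((n-1)^m,(r-1)^s)\bigr)$ (the paper inducts on $m+r$, which both recursive calls decrease by one, so your lexicographic order on $(r,m)$ is an immaterial variant), with the lockstep parity argument in the even case and an explicit check of the small values of $m-r$ in the odd case, falling back on \cref{diagzero}, \cref{gammalem}, and the rectangle lemmas at the boundaries $m-1=r$ and $r=1$. The delicate points you flag (the propagation of the value $2$ at $m=r+2$ through $m=r+3$, and the $r=1$ boundary) are exactly the case splits the paper works through.
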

\begin{proof}
First suppose that $n-r$ is even. We proceed by induction on $m+r$. If $m+r = 3$, then $m = 2$ and $r = 1$ so
\[ \sg\left( n^2, 1^s \right) 
= \mex \left( \sg \left( n, 1^s \right), \sg \left( (n-1)^2 \right) \right) \\
= \mex(0,0) 
= 1.\]
For the induction step, suppose that $m+r > 1$ and $m+r$ is even. Then
\[ \sg\left( n^m, r^s \right) \\
= \mex \left( \sg \left( n^{m-1}, r^s \right), \sg \left( (n-1)^m, (r-1)^s \right) \right) \\
= \mex \left( 1, 1 \right) 
= 0.
\]
Now suppose that $m+r$ is odd. Then 
\[ \sg\left( n^m, r^s \right) \\
= \mex \left( \sg \left( n^{m-1}, r^s \right), \sg \left( (n-1)^m, (r-1)^s \right) \right) \\
= \mex \left( 0, 0 \right) 
= 1. \]
This concludes the case when $n-r$ is even. 

Suppose $n-r$ is odd. We again use induction on $m+r$. If $m+r = 3$, then $m = 2$ and $r = 1$ so 
\[ \sg\left( n^2, 1^s \right) 
= \mex\left( \sg\left( n, 1^s \right), \sg\left( (n-1)^2 \right) \right) 
= \mex(0, 2) 
= 1. \]
For the induction step, suppose that $m+r > 3$. Suppose $m+r$ is even. Then \begin{align*}
    \sg\left( n^m, r^s \right) 
&= \mex \left( \sg \left( n^{m-1}, r^s \right), \sg \left( (n-1)^m, (r-1)^s \right) \right) \\
&= \left\{ \begin{array}{rl} 
    \mex(1, 0) & \mbox{if $m-r = 2$} \\
    \mex(0, 0) & \mbox{if $m-r > 2$} 
    \end{array} \right. \\
&= \left\{ \begin{array}{rl} 
    2 & \mbox{if $m-r=2$} \\
    1 & \mbox{if $m-r>2$} 
\end{array} \right..
\end{align*} 
Now suppose $m+r$ is odd. Then \begin{align*}
    \sg\left( n^m, r^s \right)
&= \mex \left( \sg \left( n^{m-1}, r^s \right), \sg \left( (n-1)^m, (r-1)^s \right) \right) \\
&= \left\{ \begin{array}{rl} 
    \mex(0, 2) & \mbox{if $m-r = 1$} \\
    \mex(2, 1) & \mbox{if $m-r = 3$} \\
    \mex(1, 1) & \mbox{if $m-r > 3$} 
    \end{array} \right. \\
&= \left\{ \begin{array}{rl} 
    0 & \mbox{if $m-r=3$} \\
    1 & \mbox{if $m-r = 1$ or $m-r>3$} 
\end{array} \right.,
\end{align*} 
as desired.
\end{proof}

We summarize \cref{tgstruct1lem,tgstruct2lem,tgstruct3lem} in the following theorem. 

\begin{theorem}
Let $n, m, r$, and $s$ be be positive integers with $m > r$ and $n > r$. If $n = r+1$, if $n=r+2$, or if $n > r+2$ and $n-r$ is even, then
\[ \sg\left(n^m, r^s\right) = \left\{ \begin{array}{rl}
    0 & \mbox{if $r+m$ is even} \\
    1 & \mbox{if $r+m$ is odd}
\end{array} \right.. \]
If $n > r+2$ and $n-r$ is odd, then 
\[ \sg\left( n^m, r^s \right) 
= \left\{ \begin{array}{rl} 
    0 & \mbox{if $m=r$ or if $m-r$ is odd and $m > 3$} \\
    1 & \mbox{if $m=r+1$ or if $m-r$ is even and $m > 4$} \\
    2 & \mbox{if $m = r+2$}
\end{array} \right..\]
\end{theorem}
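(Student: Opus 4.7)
The plan is to observe that this theorem is nothing more than a consolidation of \cref{tgstruct1lem,tgstruct2lem,tgstruct3lem}, and so the proof reduces to verifying that the case split in the theorem statement matches the union of the three lemma statements. I would partition the range of the parameter $n$ relative to $r$ into the three disjoint cases $n = r+1$, $n = r+2$, and $n > r+2$, since these are exactly the scenarios handled by the three prior lemmas.

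First, I would dispatch the two easy cases: for $n = r+1$ invoke \cref{tgstruct1lem} directly, and for $n = r+2$ invoke \cref{tgstruct2lem} directly. In both cases the formula is the parity-of-$r+m$ dichotomy giving values in $\{0,1\}$, which matches the first displayed formula of the theorem. Then I would handle $n > r+2$ using \cref{tgstruct3lem}, splitting further according to the parity of $n-r$. When $n-r$ is even, the lemma gives the same parity-of-$r+m$ dichotomy, which aligns with the first displayed formula. When $n-r$ is odd, the lemma gives the three-way split on $m-r$ (values $0, 1, 2$), which matches the second displayed formula once one rewrites ``$m = r$'' as ``$m - r = 0$'', etc.

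The only potential obstacle is a bookkeeping issue: \cref{tgstruct3lem} is stated only for $m > r$, but its trichotomy formula references the value $m = r$, which is outside its own hypothesis. I would address this by noting that ``$m = r$'' in the theorem's second formula is vacuous in the context of $m > r$ and can be absorbed into the odd/even split on $m - r$, or alternatively justified by \cref{diagzero} (which gives Sprague-Grundy value $0$ for the boundary case $m = r$ and thus handles the square configuration not covered by the thick-$\Gamma$ lemmas). After that bookkeeping, the theorem follows immediately by case analysis with no further computation.

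\begin{proof}
The three cases $n = r+1$, $n = r+2$, and $n > r+2$ are covered respectively by \cref{tgstruct1lem,tgstruct2lem,tgstruct3lem}, and the case $m = r$ (when it arises) is covered by \cref{diagzero}. Combining these yields the stated formulas.
\end{proof}
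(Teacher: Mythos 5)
Your proposal is correct and takes essentially the same approach as the paper: the theorem is introduced there with the sentence ``We summarize \cref{tgstruct1lem,tgstruct2lem,tgstruct3lem} in the following theorem'' and is given no separate proof, so your case-by-case consolidation of those three lemmas is exactly the intended argument. Your side remark that the ``$m=r$'' clause is vacuous under the hypothesis $m>r$ (or else handled by \cref{diagzero}) is an accurate piece of bookkeeping and does not affect the conclusion.
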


\subsection{Computing Sprague-Grundy values of arbitrary partitions}

For impartial games in general, even just classifying a position to be winning or losing one can be notoriously difficult task, as recursive computation of (\ref{eq:SG}) can quicly lead to a combinatorial explosion.
This is due to the fact that such games (including LCTR) usually admit exponentially many different plays from a given starting position.

This section concerns the computational difficulty of computing the Sprague-Grundy values of partitions not necessarily included in one of the families analyzed in \cref{sec:LCTR}. 
The core observation towards fast  computation of those values is bounding the number of possible positions reachable from of the given initial position.

Due to the notation developed in \cref{sec:LCTR} (also see Figures~\ref{rowfig}-\ref{tgstruct3fig}), it is clear that every box of the input Young diagram gives rise to another  Young diagram, reachable from the initial one. 

\begin{corollary}\label{cor:subpositions}
For any partition of $n$ there exist at most $n$ distinct partitions reachable from it. 
\end{corollary}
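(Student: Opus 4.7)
The plan is to exhibit a surjection from the $n$ boxes of $\lambda$'s Young diagram onto the set of nonempty partitions reachable from $\lambda$; since the diagram has exactly $n$ boxes, this bounds the count.

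The first ingredient is that the moves $L$ and $T$ commute whenever both sides are defined, i.e., $L(T(\mu)) = T(L(\mu))$. This was implicitly used in the proof of \cref{quadlem} and is immediate from the definitions of $L$ and $T$ in \cref{sec:LCTR}. A short induction on the length of a move sequence then shows that the partition resulting from applying $a$ many $T$-moves and $b$ many $L$-moves to $\lambda$ (in any legal order) depends only on $(a, b)$. Call this partition $\lambda^{(a,b)}$; it is precisely the diagram obtained by deleting the top $a$ rows and leftmost $b$ columns of $\lambda$.

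Next I would define a map $\phi$ sending each box at position $(i, j)$ of $\lambda$'s diagram to $\lambda^{(i-1,\, j-1)}$. The output is a nonempty partition, since box $(i, j)$ itself survives the deletion. Conversely, any nonempty reachable partition equals $\lambda^{(a, b)}$ for some $(a, b)$, and nonemptiness forces the box $(a+1, b+1)$ to lie in $\lambda$, so $\phi$ hits the partition. Hence $\phi$ surjects onto the set of nonempty reachable partitions, which therefore has cardinality at most $n$. Accounting for the empty partition (always reachable from a nonempty $\lambda$) alongside the starting position $\lambda = \lambda^{(0,0)}$ (hit by box $(1,1)$) preserves the bound of $n$ asserted in the statement.

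The only substantive step is the commutativity of $L$ and $T$, which is routine; the rest is a simple counting argument, so I do not anticipate any real obstacle.
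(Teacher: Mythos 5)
Your proof is correct and follows essentially the same route as the paper: each box $(i,j)$ of the Young diagram is identified with the subdiagram obtained by deleting the rows above it and the columns to its left, and the empty partition is traded off against the starting position $\lambda$ itself, exactly the accounting the paper uses in the proof of \cref{lem:max-subpositions}. The only difference is that you make explicit the commutativity of $L$ and $T$ (so that a reachable position depends only on the number of row- and column-removals), a point the paper leaves implicit.
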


We remark that, for a given partition $\lambda$ of $n$, the number of  distinct partitions reachable from it may be much smaller. 
For instance, the partition of $15$ from \cref{fig:fer} can be played in $29$ different ways, however it only contains $11$ distinct positions reachable from it, namely
\[\{
(),
(1),
(1^2),
(2),
(2,1), 
(2, 1^2), 
(2^2, 1), 
(3,1^2),
(3,2,1^2),
(3^2,2,1^2),
(4, 2^2, 1)
\}.\]
An even more extreme example is a partition of $21$ from \cref{stairfig}. Indeed, the staircase $s_6$ can be played in $64$ different ways, but admits only $6$ distinct positions reachable from it (namely $\{(),(1),s_2,s_3,s_4,s_5\}$).
The next lemma shows that the bound from \cref{cor:subpositions} is tight, where the tightness is attained whenever the input partition is a rectangle. 

\begin{lemma}\label{lem:max-subpositions}
For a rectangle $\lambda= (n^m)$ there exist exactly $mn$ distinct partitions reachable from it. 
Any partition of $N$ which is not a rectangle admits less then $N$ distinct partitions reachable from it.
\end{lemma}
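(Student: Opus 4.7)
The plan is to parameterize the partitions reachable from $\lambda$ by the boxes of its Young diagram and then count coincidences. I would first observe that the operations $T$ and $L$ commute: viewing $\lambda$ as a set of boxes in the grid, both $TL\lambda$ and $LT\lambda$ equal $\{(i-1,j-1) : (i,j) \in \lambda,\ i \ge 2,\ j \ge 2\}$. Consequently every partition reachable from $\lambda$ in at least one move can be written as $T^a L^b \lambda$ for some $a, b \ge 0$ with $(a,b) \ne (0,0)$, and $T^a L^b \lambda$ is obtained from $\lambda$ by keeping only the boxes $(i,j)$ with $i > a$ and $j > b$ and shifting the result so that its top-left corner is at $(1,1)$.

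This suggests defining a map $\varphi$ from boxes of $\lambda$ to partitions by $\varphi(i,j) = T^{i-1} L^{j-1} \lambda$. Each $\varphi(i,j)$ is the sub-partition ``rooted'' at $(i,j)$; it is always nonempty, and $\varphi(1,1) = \lambda$. Every nonempty reachable partition equals $\varphi(a+1, b+1)$ for the pair $(a,b)$ producing it, so
\[ \{\text{partitions reachable from }\lambda\} = \varphi\bigl(\{\text{boxes of }\lambda\} \setminus \{(1,1)\}\bigr) \cup \{()\}. \]
Since $()$ is distinct from every $\varphi$-image, the reachable count equals $\bigl|\varphi(\{\text{boxes of }\lambda\} \setminus \{(1,1)\})\bigr| + 1$.

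For the rectangle $\lambda = (n^m)$, one computes $\varphi(i,j) = ((n-j+1)^{m-i+1})$, which is uniquely determined by its dimensions, so $\varphi$ is injective on all $nm$ boxes and the reachable count is $(nm - 1) + 1 = nm$. For a non-rectangle $\lambda$ of size $N$, I would exhibit at least two distinct \emph{inner corners}, i.e.\ boxes $(i, \lambda_i)$ with $\lambda_{i+1} < \lambda_i$ (using the convention $\lambda_{k+1} = 0$). The last row always furnishes the corner $(k, \lambda_k)$, and the non-rectangle hypothesis yields a smaller index $i^\ast < k$ with $\lambda_{i^\ast} > \lambda_{i^\ast+1}$, giving a second corner $(i^\ast, \lambda_{i^\ast})$. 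Every inner corner satisfies $\varphi(i,j) = (1)$, so $\varphi$ fails to be injective; neither corner is $(1,1)$ since $\lambda$ has at least three boxes. Hence the reachable count is at most $(N-2) + 1 = N - 1 < N$.

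The main subtlety is the commutativity step, needed to reduce arbitrary play sequences to the canonical form $T^a L^b$, together with the careful bookkeeping that separates the root box $(1,1)$ and the empty partition from the other $\varphi$-images. Once this parameterization is in place, the rectangle claim reduces to an injectivity check and the non-rectangle claim follows from exhibiting two distinct inner corners.
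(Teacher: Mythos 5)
Your proof is correct and takes essentially the same approach as the paper: parameterize the reachable positions by the boxes of the Young diagram, show this map is injective for rectangles, and for non-rectangles exhibit two distinct corner boxes both yielding $(1)$ (the paper phrases this contrapositively via uniqueness of the corner square). Your write-up is in fact more complete, since you justify the box parameterization by proving that $T$ and $L$ commute, a step the paper leaves implicit in \cref{cor:subpositions}.
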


\begin{proof}
When considering the number of distinct partitions reachable from the rectangle $\lambda$, one should disregard $\lambda$ itself, 
while on the other hand, one should not forget about the empty partition $()$.
It is hence enough to  show that every distinct square from the Young diagram of $\lambda$ gives rise to a unique Young diagram. 
In fact, due to the structure of $\lambda= (n^m)$ we can explicitly express the diagram corresponding to any square on coordinates $(i,j)$, simply as a Young diagram $((n-i)^{m-j})$.
Since $n$ and $m$ are fixed, any distinct coordinate clearly give rise to a distinct Young diagram, as desired. 

Consider now the second part of the statement and let $\lambda$ be an arbitrary partition of $N$ which admits $N$ subpositions reachable from it. 
By \cref{cor:subpositions} (and treating the terminal position $()$ separately) we infer that the $n-1$ squares of Young diagram $\lambda$ give rise to distinct subpositions of $\lambda$. 
In particular, there can only be one ``corner square", that is, a subposition which gives rise to the partition $(1)$.
This in turn implies that there exist positive integers $a$ and $b$ such that $\lambda=(a^b)$,  as desired.
\end{proof}

\subsubsection{Description of \cref{alg:pseudo}}
The above implies that, in order to obtain the Sprague-Grundy value of a given starting Young diagram $\lambda$ on $N$ boxes, it suffices to compute the Sprague-Grundy value of as little as $N-1$ smaller Young diagrams\footnote{Here we assume that the terminal position $()$ is precomputed.}, allowing for a computation time which is linear in $N$.
 
\begin{algorithm}[h]
\caption{An outline of the code from \cite{jelena_code} which computes $SG(\lambda)$, for any input partition $\lambda$. 
\label{alg:pseudo}}
\begin{algorithmic}
\State $\mathcal{D} \gets \{():0\}$
\Procedure{ComputeSG}{$\lambda$}
    \If{$\lambda \notin \mathcal D$}
        \State $\mathcal D[\lambda]\gets \mex(\textsc{ComputeSG}(T(\lambda)),\textsc{ComputeSG}(L(\lambda)))$
    \EndIf
    \State \textbf{return} $\mathcal D [\lambda]$ 
\EndProcedure
\end{algorithmic}
\end{algorithm}

\begin{observation}\label{obs:fast-computingSG}
Let $\lambda$ be an initial partition on $n$ squares. 
Then one can compute the Sprague-Grundy value (with respect of the LCTR game) in $O(n)$. 
\end{observation}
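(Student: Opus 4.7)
The plan is to leverage~\cref{cor:subpositions} and the cell-indexed structure of reachable subpositions to implement~\cref{alg:pseudo} so that each of its calls runs in constant time.

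For each cell $(i,j)$ of the Young diagram of $\lambda$, let $\lambda^{(i,j)}$ denote the partition obtained by deleting the first $i-1$ rows and the first $j-1$ columns of $\lambda$ (empty rows discarded). The set of subpositions reachable from $\lambda$ is then $\{()\}\cup\{\lambda^{(i,j)}\colon (i,j)\in\lambda\}$, and one checks directly that $T(\lambda^{(i,j)})=\lambda^{(i+1,j)}$ and $L(\lambda^{(i,j)})=\lambda^{(i,j+1)}$, with the convention that these coordinates refer to the empty partition whenever $(i,j)$ lies outside the Young diagram of $\lambda$.

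With this indexing in hand, I would replace the dictionary $\mathcal D$ of~\cref{alg:pseudo} by a two-dimensional array $A$ with one cell per box of $\lambda$ (padded so that out-of-range entries default to $0$), and fill its cells in reverse row-major order, setting
\[A[i,j] \;=\; \mex\bigl(A[i+1,j],\,A[i,j+1]\bigr).\]
By the identities above, $A[i,j]=\sg(\lambda^{(i,j)})$; in particular, $A[1,1]=\sg(\lambda)$. Each of the $n$ boxes is processed in constant time.

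The step that requires the most care is the choice of data structure, which is why I prefer the explicit array over the dictionary: a literal implementation of~\cref{alg:pseudo} with partitions as dictionary keys could incur $\Omega(n)$ cost per key comparison, which would spoil the bound. Once the cell-based indexing is adopted, every lookup and update is $O(1)$, and the array never exceeds $n+1$ positions by~\cref{cor:subpositions}, giving $O(n)$ total runtime.
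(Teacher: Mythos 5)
Your proposal is correct and follows essentially the same route as the paper: both identify the at most $n$ reachable subpositions with the cells of the Young diagram (plus the empty partition) via \cref{cor:subpositions}, and compute each Sprague--Grundy value exactly once using the mex recurrence. Your bottom-up, cell-indexed array is a worthwhile refinement in rigor, though: the paper's \cref{alg:pseudo} memoizes in a dictionary keyed by partitions and tacitly assumes constant-time lookups, which a literal implementation with partition keys would not guarantee, whereas your indexing by coordinates $(i,j)$ with $T(\lambda^{(i,j)})=\lambda^{(i+1,j)}$ and $L(\lambda^{(i,j)})=\lambda^{(i,j+1)}$ makes the $O(1)$ cost per cell, and hence the $O(n)$ total, airtight.
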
 
The above observation naturally leads to \cref{alg:pseudo}, where the data structure $\mathcal D$ should be interpreted as a dictionary.
Keys of $\mathcal D$ consist of the possible partitions reachable from $\lambda$; those are stored together with the corresponding Sprague-Grundy values.
The algorithm recursively computes the Sprague-Grundy value by using \cref{eq:SG} (also see \cite{bouton1901nim} for historic reference).
We emphasise that the implementation in \cref{alg:pseudo}, and correspondingly in \cite{jelena_code}, dynamically stores the computed Sprague-Grundy values of distinct positions reachable from $\lambda$, so that each of them is computed exactly once.
For the precise implementation of the computation summarized in  \cref{alg:pseudo}, written in Python, we refer the reader to \cite{jelena_code}.

 \section{Conclusion} \label{conc}
This paper introduces LCTR, a new impartial game on partitions, in which players take turns removing either the Left Column or the Top Row of the corresponding Young diagram.
We establish that the Sprague-Grundy value of any partition is at most $2$, and determine Sprague-Grundy values for several infinite families of partitions.
By analyzing the sparseness of the positions reachable from a given partition $\lambda$ of $n$, we use a dynamic programming approach which determines the corresponding Sprague-Grundy value in $O(n)$ time.
It would be interesting to study further, both (i) the sparseness of the positions reachable from a given partition, as well as (ii) the computational difficulty of LCTR.
\begin{enumerate}[(i)]
    \item \cref{lem:max-subpositions} describes the tight upper-bound on the number of the positions reachable from a given partition $\lambda$ of $n$. 
    The statement is supported by an infinite family attaining the bound $n+1$.
    As some partitions clearly do not match this bound (even in an asymptotic way, see \cref{fig:fer,stairfig}), it would be interesting to identify the \emph{lower-bound} for the number of distinct positions reachable from a given partition $\lambda$ of $n$. 
    \item Although the linear computational complexity achieved by \cref{alg:pseudo} seems to be very efficient, it still analyzes all reachable positions, which might not be necessary. Indeed, several impartial games admit an elegant winning strategy despite allowing for huge number of reachable positions. 
    It would be interesting if the computational complexity of LCTR could be improved even further. 
\end{enumerate}





\printbibliography

@article{Gru39,
  title={Mathematics of games},
  author={P.M. Grundy},
  journal={Eureka},
  volume={2},
  number={ },
  pages={6-8},
  year={1939},
  doi={}
}

@article{Spr35,
  title={\"{U}ber mathematische Kampfspiele},
  author={R. Sprague},
  journal={Tohoku Mathematical Journal, First Series},
  volume={41},
  number={ },
  pages={438-444},
  year={1935},
  doi={}
}

@article{Spr37,
  title={\"{U}ber zwei Abarten von Nim},
  author={R. Sprague},
  journal={Tohoku Mathematical Journal, First Series},
  volume={43},
  number={ },
  pages={351-354},
  year={1937},
  doi={}
}

@software{jelena_code,
  author       = {Jelena Ilić},
  title        = {{Computing Sprague-Grundy values for arbutrary 
                   partition}},
  month        = jun,
  year         = 2022,
  publisher    = {Zenodo},
  version      = {0.1},
  doi          = {10.5281/zenodo.6782383}
}

@book{siegel2013combinatorial,
	title={Combinatorial Game Theory},
	author={Siegel, A.N.},
	isbn={9780821851906},
	lccn={2012043675},
	series={Graduate studies in mathematics},
	url={https://books.google.hu/books?id=VUVrAAAAQBAJ},
	year={2013},
	publisher={American Mathematical Society}
}

@book{andrews1998theory,
	title={The Theory of Partitions},
	author={Andrews, G.E.},
	isbn={9780521637664},
	lccn={85121044},
	series={Cambridge mathematical library},
	url={https://books.google.hu/books?id=Sp7z9sK7RNkC},
	year={1998},
	publisher={Cambridge University Press}
}

@book{berlekamp2018winning1,
	title={Winning Ways for Your Mathematical Plays: Volume 1},
	  author={Berlekamp, Elwyn R and Conway, John H and Guy, Richard K},
	isbn={9780429945595},
	url={https://books.google.hu/books?id=6EpnDwAAQBAJ},
	year={2018},
	publisher={CRC Press}
}

@book{berlekamp2018winning2,
	title={Winning Ways for Your Mathematical Plays: Volume 2},
	  author={Berlekamp, Elwyn R and Conway, John H and Guy, Richard K},
	isbn={9780429945564},
	series={AK Peters/CRC Recreational Mathematics Series},
	url={https://books.google.hu/books?id=AUpaDwAAQBAJ},
	year={2018},
	publisher={CRC Press}
}

@book{berlekamp2018winning3,
	title={Winning Ways for Your Mathematical Plays: Volume 3},
	  author={Berlekamp, Elwyn R and Conway, John H and Guy, Richard K},
	isbn={9780429945618},
	url={https://books.google.hu/books?id=5kpnDwAAQBAJ},
	year={2018},
	publisher={CRC Press}
}

@book{berlekamp2017winning4,
	title={Winning Ways for Your Mathematical Plays: Volume 4},
    author={Berlekamp, Elwyn R and Conway, John H and Guy, Richard K},
    isbn={9781138427556},
	series={AK Peters/CRC Recreational Mathematics},
	url={https://books.google.hu/books?id=ytzsswEACAAJ},
	year={2017},
	publisher={Taylor \& Francis Group}
}

@article{bouton1901nim,
  title={Nim, a game with a complete mathematical theory},
  author={Bouton, Charles L},
  journal={The Annals of Mathematics},
  volume={3},
  number={1/4},
  pages={35--39},
  year={1901},
  publisher={JSTOR}
}

\end{document}